\newcommand{\lvertiii}{{\left\vert\kern-0.25ex\left\vert\kern-0.25ex\left\vert}}
\newcommand{\rvertiii}{{\right\vert\kern-0.25ex\right\vert\kern-0.25ex\right\vert}}
\newtheorem{assume}[theorem]{Assumption}
\newtheorem{remark}[theorem]{Remark}
\title{Practical Newton Methods for Electronic Structure Calculations
\thanks{This work was supported by the National Natural Science
Foundation of China under grants 91730302 and 11671389 and the Key Research Program of Frontier Sciences of the Chinese Academy of Sciences under grant QYZDJ-SSW-SYS010.}}
\author{Xiaoying Dai\footnotemark[2], Liwei Zhang\footnotemark[2],  and Aihui Zhou\footnotemark[2]}
\begin{document}

\maketitle

\renewcommand{\thefootnote}{\fnsymbol{footnote}}

\footnotetext[2]{LSEC, Institute of Computational Mathematics and Scientific/Engineering Computing,
Academy of Mathematics and Systems Science, Chinese Academy of Sciences,  Beijing 100190, China; and School of Mathematical Sciences,
University of Chinese Academy of Sciences, Beijing 100049, China. daixy, zhanglw, azhou@lsec.cc.ac.cn}

\author{
Xiaoying Dai\thanks{LSEC, Institute of Computational Mathematics
and Scientific/Engineering Computing, Academy of Mathematics and
Systems Science, Chinese Academy of Sciences, Beijing 100190, China
(daixy@lsec.cc.ac.cn).} \and  Liwei Zhang\thanks{LSEC, Institute of Computational Mathematics
and Scientific/Engineering Computing, Academy of Mathematics and
Systems Science, Chinese Academy of Sciences, Beijing 100190, China
(zhanglw@lsec.cc.ac.cn).}
\and Aihui Zhou\thanks{LSEC, Institute of Computational Mathematics
and Scientific/Engineering Computing, Academy of Mathematics and
Systems Science, Chinese Academy of Sciences, Beijing 100190, China
(azhou@lsec.cc.ac.cn).}
}

\renewcommand{\thefootnote}{\arabic{footnote}}
\begin{abstract}
  In this paper, we propose and analyze some practical Newton methods for electronic structure calculations. We show the convergence and the local quadratic convergence rate for the Newton method when the Newton search directions are well-obtained. In particular, we investigate some basic implementation issues in determining the search directions and step sizes which ensures the convergence of the subproblem at each iteration and accelerates the algorithm, respectively. It is shown by our numerical experiments that our Newton methods perform better than the existing conjugate gradient method, and the Newton method with the adaptive step size strategy is even more efficient.
\end{abstract}

\begin{keywords}
Electronic structure calculations, Kohn-Sham energy functional, orthogonality constrained minimization problem,  Newton method, convergence
\end{keywords}

\begin{AMS} 65K05, 65N25, 81Q05, 90C30\end{AMS}


\section{Introduction}
Electronic structure calculations are the fundamental issues in areas as chemistry, materials science and drug design. It is the many-body Schr\"{o}dinger equation that describes the electron's motion. However, the computational cost for solving such a high dimensional linear eigenvalue problem is extremely expensive, which emphasizes the importance of other equivalent or approximated model. The Kohn-Sham density functional theory (DFT) model \cite{HK, KS} is one of the most widely used model in electronic structure calculations which can be formulated as either a nonlinear eigenvalue problem or an orthogonality constraint minimization problem.

The self consistent field (SCF) iterations is usually applied to solving such a nonlinear eigenvalue problem. There are many implementation issues that should be taken into account. For instance, the density mixing approaches will effect the numerical behaviour much and is thus very important to be chosen \cite{Pu}. In addition, the convergence of SCF iterations is still uncertain although its convergence was proven under the assumptions that the gap between the occupied states and unoccupied states is sufficiently large and the second-order derivatives of the exchange correlation functional are uniformly bounded from above \cite{LWWUY, LWWY, YGM}.


In the context of solving the Kohn-Sham DFT model as a Riemannian manifold constrained minimization problem, there are several progresses in recent years. In \cite{SRNB, ZZWZ}, the gradient type methods are investigated where the negative gradient directions are chosen to be the search directions. It has been shown in \cite{ZZWZ} that the gradient type methods can outperform the SCF iterations in many cases. The authors in \cite{DLZZ} construct a conjugate gradient (CG) method which chooses CG directions as search directions and uses the second order Taylor expansion to get an approximate optimal step size at each iteration. The numerical experiments therein show that the CG method outperforms the gradient type methods significantly in both computational time and stability.

As for general optimization problems with orthogonality constraints, we mention that a first-order algorithm is proposed in \cite{GLCY} which presents some new strategies for orthogonalization and optimizes the objective functional band-by-band. Some parallelizable algorithms are investigated in \cite{DLZxZ, GLY}. We also observe that a Riemannian Newton method with a specific form of retraction is applied to a simple nonlinear eigenvalue problem in \cite{ZBJ} and an adaptive trust region Newton method is proposed and analyzed for Riemannian optimization in \cite{HMWY}.


In most of the optimization methods mentioned above, the Armijo-type backtracking procedure is used to guarantee the convergence. Most recently, an adaptive step size strategy for orthogonality constrained line search methods is proposed in \cite{DZZ} which has been proven to be more efficient than the backtracking-based strategy.

In this paper, we propose a Newton method for electronic structure calculations. We prove the convergence and show the convergence rate of our method under some mild assumptions for many orthogonality preserving strategies including all the widely used ones. To make the Newton method more practical, we study a new perspective to find the Newton search directions and propose a convergent algorithm. We also apply the adaptive step size strategy proposed in \cite{DZZ} to the Newton method and prove the convergence of the Newton method with adaptive step size strategy. We carry out several numerical experiments based on the software package Octopus\footnote[1]{OCTOPUS. http://www.tddft.org/programs/octopus.} for some typical systems including the systems which contain thousands of electrons. These numerical experiments show that the Newton method converges faster than the CG method in \cite{DLZZ} while the latter one was, to our knowledge, the most efficient and stable algorithm for minimizing the Kohn-Sham total energy functional till now.

The rest of this paper is organized as follows: In Section \ref{sec-pre}, we introduce the Kohn-Sham DFT model and some notation that will be used in the rest of the paper.  We study the backtracking-based Newton method including its local convergence as well as convergence rate in Section \ref{sec-alg}. In Section \ref{sec-spbs}, we not only investigate the practical way to solve Newton search direction and apply an adaptive step size strategy to the Newton method, but also prove the convergence of the adaptive algorithm. We report several numerical experiments in Section \ref{sec-ne} to show the advantages of our Newton methods. We then conclude in Section \ref{sec-cln}. Finally we provide some related discussions on the retractions in Appendix \ref{app_retraction}.

\section{Preliminaries} \label{sec-pre}
\subsection{Discretized Kohn-Sham model}
By Kohn-Sham density functional theory, the ground state 
of a system consisting of $M$ nuclei and $N$ electrons can be obtained by solving the
following constrained optimization problem
\begin{equation}\label{Emin}
\begin{split}
&\inf_{U = (u_1, \dots, u_N) \in (H^1(\mathbb{R}^3))^N} \ \ \ \ \ \ \ E_{\textup{KS}}(U) \\
s.t.  &\int_{\mathbb{R}^3} u_iu_j = \delta_{ij},  1\leq i,j\leq N,
\end{split}
\end{equation}
where the Kohn-Sham total energy $E(U)$ is defined as \begin{eqnarray}\label{energy}
E_{\textup{KS}}(U)&=&\frac{1}{2} \int_{\mathbb{R}^3} \sum_{i=1}^N|\nabla u_i(r)|^2 dr +
\frac{1}{2}\int_{\mathbb{R}^3}\int_{\mathbb{R}^3}\frac{\rho(r)\rho(r')}{|r-r'|}drdr' \nonumber\\
&&+\int_{\mathbb{R}^3} \sum_{i=1}^N u_i(r)V_{ext}(r)u_i(r)dr+ \int_{\mathbb{R}^3}
\varepsilon_{xc}(\rho)(r)\rho(r)dr,
\end{eqnarray}
and $u_i\in H^1(\mathbb{R}^3), i = 1, \cdots, N$ are the  Kohn-Sham orbitals.
Here $\rho(r)=\sum \limits_{i=1}^N|u_i(r)|^2$ is the electronic density,
$V_{ext}(r)$ is the external potential generated by the nuclei. The $\varepsilon_{xc}(\rho)(r)$ in the forth
term is the exchange-correlation functional, describing the many-body effects of exchange and
correlation, which is not known explicitly, and some approximation (such as local density approximation (LDA),
generalized gradient approximation (GGA))
has to be used \cite{Ma}.

We may discretize the Kohn-Sham model by the plane wave method, the local basis set method, or
the real space methods. Under some proper discretization, the associate discretized Kohn-Sham energy model
can be formulated as
\begin{equation}\label{dis-emin}
\min_{U\in \mathcal{M}^N_{N_g}} \ \ \ E_{\textup{d-KS}}(U),
\end{equation}
where $N_g$ presents the degree of freedom and $\mathcal{M}^N_{N_g}$ is the Stiefel manifold defined by
\begin{equation*}
\mathcal{M}^N_{N_g} = \{U \in\mathbb{R}^{N_g\times N}:   U^TU  = I_N\},
\end{equation*}
with $U=(u_1, u_2, \cdots, u_N)\in \mathbb{R}^{N_g\times N}$ and $I_N$ denotes the identity matrix of order $N$. Typically, $N_g \gg N$. The column vectors of $U$ can be viewed as the
 discretized wave functions, and $E(U)$, the discretized Kohn-Sham total energy, is often called the total energy functional of $U$. If we denote
the charge density by
\begin{equation}\label{density}
\rho(U)=\textup{diag}(UU^T),
\end{equation}
where $\textup{diag}(A)$ is a column vector consisting of the diagonal entries of the matrix $A$, then
the discretized Kohn-Sham total energy can be stated as
 \begin{equation}\label{E-KS}
E_{\textup{d-KS}}(U)=\frac{1}{2}\textup{tr}(U^TLU)+\textup{tr}( U^T V_{ext} U)+\frac{1}{2}\textup{tr}(\rho(U)^TL^\dagger\rho(U)) + \textup{tr}(\rho(U)^T
       \varepsilon_{xc}(\rho(U))),
\end{equation}
where $L\in\mathbb{R}^{N_g\times N_g}$ is the disctetized Laplace operator, $V_{ext}\in\mathbb{R}^{N_g\times N_g}$ is the discretized external potential, $L^\dagger$ is the generalized inverse of $L$, and $\varepsilon_{xc}(\rho(U))$ is the discretized exchange correlation potential. Hereafter, we omit the subscript ``d-KS", i.e., denote $E(U) = E_{\textup{d-KS}}(U)$, for simplicity.


We see that the first order optimal condition of \eqref{dis-emin} is

\begin{eqnarray}\label{dis-evp}
\left\{ \begin{array}{rcl}
\mathcal{H}(U)U&=&U\Lambda, \\[0.2cm]
U^TU&=&I_N,
\end{array}
\right.
\end{eqnarray}
where $\Lambda\in\mathbb    {R}^{N\times N}$ is the Lagrangian multiplier and is symmetric,
\begin{equation}\label{dis-Ham}
\mathcal{H}(U)=\frac{1}{2}L+V_{ext}+\textup{Diag}(L^\dagger\rho(U))+\textup{Diag}(v_{xc}(\rho(U))),
\end{equation}
with $\textup{Diag}(u)$ denotes the diagonal matrix with $u$ on its diagonal, and
\begin{equation*}
v_{xc}(\rho(U))
= \frac{\delta(\rho\varepsilon_{xc}(\rho))}{\delta\rho}.
\end{equation*}

Note that
\begin{eqnarray}\label{orth-invar}
E(U)=E(UP), \forall P\in\mathcal{O}^N,
\end{eqnarray}
where $\mathcal{O}^N$ is the set of all orthogonal matrices of order $N$.
We see that the solution of \eqref{dis-emin} is not unique.
Due to (\ref{orth-invar}), we introduce a Grassmann manifold,
which is the quotient space of the Stiefel manifold and is defined as follows
\begin{equation*}
\mathcal{G}^N_{N_g} = \mathcal{M}^N_{N_g}/\sim.
\end{equation*}
Here, $\sim$ denotes the equivalence relation: we say  $\hat{U} \sim U$,  if there exists $P \in \mathcal{O}^{N}$ such that $\hat{U} = UP$. For any $U \in \mathcal{M}^N_{N_g}$, we denote the equivalence class  by $[U]$, that is,
\begin{equation*}
[U] = \{U P: P \in  \mathcal{O}^{N}\}.
\end{equation*}

In addition, the definition of tangent space at $[U] \in \mathcal{G}^N_{N_g}$ and tangent bundle on Grassmann manifold $\mathcal{G}^N_{N_g}$ are given in \cite{SRNB} that
\begin{equation}\label{dis-tangent}
\mathcal{T}_{[U]}\mathcal{G}^N_{N_g} = \{W\in \mathbb{R}^{N_g\times N}:
W^TU   = 0\},
\end{equation}
and
\begin{equation*}
\displaystyle \mathcal{T}\mathcal{G}^N_{N_g} = \bigcup_{[U]\in\mathcal{G}^N_{N_g}}\mathcal{T}_{[U]}\mathcal{G}^N_{N_g}.
\end{equation*}

To get rid of the non-uniqueness, we turn to study
\begin{equation}\label{emin-Grass}
\min_{[U] \in \mathcal{G}^N_{N_g}} \ \ \ E(U).
\end{equation}
It is worth mentioning that even if we investigate \eqref{emin-Grass} on the Grassmann manifold, the uniqueness of its solution is still unknown since the total energy functional is non-convex. Here and hereafter, we assume that the discretized energy $E(U)$ is smooth enough unless we mention explicitly.

We denote $\nabla E(U)=\mathcal{H}(U)U$ the gradient of $E(U)$ in $\mathbb{R}^{N_g\times N}$, and can calculate the second order derivative of $E$ at point $U$ as
$\nabla^2E(U)\in \mathcal{L}(\mathbb{R}^{N_g\times N}, \mathbb{R}^{N_g\times N})$:
$$\nabla^2E(U)[D]= \mathcal{H}(U)D + 2 \textup{Diag}(J\textup{diag}(DU^T))U, ~U,D\in\mathbb{R}^{N_g\times N},$$ with $J=L^\dagger + \frac{\delta^2(\rho\varepsilon_{xc}(\rho))}{\delta\rho^2}$ \cite{LWWY, WMUZ}.
Similarly, the third order derivative  of $E$ at point $U$ is denoted by $\nabla^3E(U)\in \mathcal{L}(\mathbb{R}^{N_g\times N}\times\mathbb{R}^{N_g\times N},\mathbb{R}^{N_g\times N})$ which satisfies
\begin{eqnarray*}
\nabla^3E(U)[D_1,D_2] &=&2\big(\textup{Diag}(J\textup{diag}(D_2U^T))D_1 \\
&+&\textup{Diag}(J\textup{diag}(D_1U^T))D_2 +\textup{Diag}(J\textup{diag}(D_1D_2^T))U\big) \\
&+&4\textup{Diag}(\textup{Diag}(\frac{\delta^3(\rho\varepsilon_{xc}(\rho))}{\delta\rho^3}\textup{diag}(D_2U^T))\textup{diag}(D_1U^T))U.
\end{eqnarray*}

We see from \cite{EAS} that the gradient of $E(U)$ at
$[U]$ on Grassmann manifold $\mathcal{G}^N_{N_g}$ is a tangent vector in $\mathcal{T}_{[U]}\mathcal{G}^N_{N_g}$ which has the form
\begin{equation*}
\begin{split}
    \nabla_G E(U) & = (I - UU^T)\nabla E(U), ~\forall U\in\mathcal{M}^N_{N_g},
\end{split}
\end{equation*}
or
\begin{equation}\label{gra}
\nabla_G E(U)= \nabla E(U) - U\Sigma = \mathcal{H}(U)U - U\Sigma,
\end{equation}
where $\Sigma = U^T\nabla E(U)= U^T\mathcal{H}(U)U$ is symmetric since
$\mathcal{H}(U)$ is a symmetric operator. We may review $\nabla_G E(U)$ as a $(0, 1)$ type tensor in the sense of
$$\nabla_G E(U)[D] = \langle\nabla_G E(U), D\rangle = \textup{tr}(\nabla_G E(U)^T D), ~\forall D\in\mathcal{T}_{[U]}\mathcal{G}^N_{N_g}.$$
Here, $\langle\cdot,\cdot\rangle$ forms the inner product in $R^{N_g\times N}$.

We also need the  Hessian of $E(U)$ on the Grassmann manifold, which is defined as \cite{EAS}
\begin{equation}\label{hes}
\nabla^2_GE(U)[D] =
(I-UU^T)\nabla^2E(U)[D] - D\Sigma, \forall U\in \mathcal{M}^N_{N_g}, ~D \in
\mathcal{T}_{[U]}\mathcal{G}^N_{N_g},
\end{equation}
we see that $\nabla^2_GE(U)$ can be viewed as a $(0, 2)$ type tensor and we sometime abuse the notation that
\begin{eqnarray*}
\nabla^2_GE(U)[D_1,D_2] &=& \langle\nabla_G^2E(U)[D_1],D_2 \rangle \\
 &=& \textup{tr}(D_2^T\nabla^2E(U)[D_1]) - \textup{tr}(D_2^TD_1\Sigma), \forall \ D_1,D_2 \in
\mathcal{T}_{[U]}\mathcal{G}^N_{N_g}.
\end{eqnarray*}



We obtain from $\nabla^2 E(U)\in\mathcal{L}(\mathbb{R}^{N_g\times N}, \mathbb{R}^{N_g\times N})$ that for all $U\in\mathcal{M}_{N_g}^N$, there exists a constant $\tilde{C}>0$, such that
\begin{equation}\label{upb1}
\|\nabla^2E(U)[D]\|_F\le \tilde{C}\|D\|_F,
\end{equation}
where $\|\cdot\|_F$ denotes the Frobenius norm of a matrix. Thus,
\begin{eqnarray}\label{upb}
\|\nabla^2_GE(U)[D]\|_F&\leq&\|(I-UU^T)\nabla^2E(U)[D]\|_F+\|D\Sigma\|_F \\ \notag
&\leq&\|\nabla^2E(U)[D]\|_F+\|D\|_F\|\Sigma\|_2 \\ \notag
&\leq&\tilde{C}\|D\|_F+|\lambda_{max}\big(\mathcal{H}(U)\big)|\|D\|_F \\ \notag
&\leq&C\|D\|_F, \forall \ D \in
\mathcal{T}_{[U]}\mathcal{G}^N_{N_g}.
\end{eqnarray}
where $C=\tilde{C}+|\lambda_{max}\big(\mathcal{H}(U)\big)|$ is a positive constant.

Similarly, we can calculate the third order derivative of $E$ on the Grassmann manifold as
\begin{eqnarray*}
\nabla_G^3E(U)[D_1,D_2]&=&(I_N-UU^T)\nabla^3E(U)[D_1,D_2]-D_2U^T\nabla^2 E(U)[D_1] \\
&&-D_1U^T\nabla^2 E(U)[D_2]-D_1D_2^T\nabla E(U), \ \forall \ D_1,D_2\in\mathcal{T}_{[U]}\mathcal{G}^N_{N_g},
\end{eqnarray*}
Here, $\nabla_G^3E(U)$ is equivalent to a $(0, 3)$ type tensor, and we hence abuse the notation that
\begin{eqnarray*}
\nabla^3_GE(U)[D_1,D_2,D_3] &=& \langle\nabla_G^3E(U)[D_1,D_2],D_3 \rangle \\
&=&\textup{tr}(D_3^T\nabla^3E(U)[D_1,D_2]) - \textup{tr}(D_3^TD_2U^T\nabla^2E(U)[D_1]) \\
&&- \textup{tr}(D_3^TD_1U^T\nabla^2E(U)[D_2])-\textup{tr}(D_3^TD_1D_2^T\nabla E(U)), \\
&&\forall \ D_1,D_2, D_3 \in \mathcal{T}_{[U]}\mathcal{G}^N_{N_g}.
\end{eqnarray*}

We see that $\nabla_G^3E(U)$ is bounded above by using the same strategy as the Hessian operator, namely, there exists a positive constant which is also denoted by $C$, such that
\begin{equation}\label{3upb}
\|\nabla^3_GE(U)[D_1,D_2]\|_F\leq C\|D_1\|_F\|D_2\|_F, \forall \ D_1, D_2 \in
\mathcal{T}_{[U]}\mathcal{G}^N_{N_g}.
\end{equation}

\subsection{Manifold related}

To address the numerical theory, we introduce two distances on Grassmann manifold $\mathcal{G}^N_{N_g}$. Let $[U], [V] \in \mathcal{G}^N_{N_g}$, with
$U,V\in \mathcal{M}^N_{N_g}$. If $U^TV=A\cos{\Theta}B^T$ and $V-U(U^TV)=A_2\sin{\Theta}B^T$ is the Singular Value Decomposition (SVD) of $U^TV$ and $V-U(U^TV)$ respectively, then we obtain from Lemma A.1 of \cite{DLZZ} that there exists a geodesic
\begin{equation}\label{geodesic1}
 [\Gamma(t)]=[UA\cos{(\Theta t)}A^T+A_2\sin{(\Theta t)}A^T], t\in[0,1],
\end{equation}
such that
\begin{eqnarray*}
[\Gamma(0)]=[U], [\Gamma(1)]=[V].
\end{eqnarray*}
Here, $$\Theta=\textup{Diag}(\theta_1, \theta_2, \cdots, \theta_N)$$ with $\theta_i\in[0, \frac{\pi}{2}]$ being a diagonal matrix, $$\sin{(\Theta t)}=\textup{Diag}(\sin(\theta_1 t), \sin(\theta_2 t), \cdots, \sin(\theta_N t)),$$
and
$$\cos{(\Theta t)}=\textup{Diag}(\cos(\theta_1 t), \cos(\theta_2 t), \cdots, \cos(\theta_N t)).$$

We define the distance between $[U]$ and $[V]$ on the Grassmann manifold by
\begin{equation}\label{dist}
\textup{dist}_{F}([U], [V]) = \min_{P\in \mathcal{O}^{N\times N}} \| U - V P \|_F,
\end{equation}
or
\begin{equation}
\textup{dist}_{geo}([U], [V]) =  \| A_2\Theta A^T \|_F.
\end{equation}

\begin{remark}
It can be verified that
\begin{equation*}
\begin{split}
&\textup{dist}_{F}([U], [V]) = \|2\sin{\frac{\Theta}{2}}\|_F, \\
&\textup{dist}_{geo}([U], [V]) = \| \Theta \|_F,
\end{split}
\end{equation*}
which indicates that these two kinds of distance are equivalent, namely, $$\textup{dist}_{F}([U], [V])\leq\textup{dist}_{geo}([U], [V])\leq2\textup{dist}_{F}([U], [V]).$$
\end{remark}



To avoid the confusion caused by arc and major arc, we define all elements in the tangent bundle which denote the distance of some $[U], [V]\in\mathcal{G}_{N_g}^N$ by

$$\mathcal{D} = \{D\in \mathcal{T}\mathcal{G}_{N_g}^N|\exists [U],[V]\in\mathcal{G}_{N_g}^N, \|D\|_F=\textup{dist}_{geo}([U],[V])\},$$
and it restriction on $\mathcal{T}_{[U]}\mathcal{G}_{N_g}^N$ by
$$\mathcal{D}_{[U]} = \{D\in \mathcal{T}_{[U]}G_{N_g}^N|\exists [V]\in\mathcal{G}_{N_g}^N, \|D\|_F=\textup{dist}_{geo}([U],[V])\}.$$
Then we see that $\mathcal{D}$ and $\mathcal{D}_{[U]}, \forall U\in\mathcal{G}_{N_g}^N$ are bounded and have the following observation.

\begin{remark}
For any $U\in\mathcal{M}^N_{N_g}, D\in\mathcal{D}_{[U]}$, if $D=ASB^T$ is the SVD of $D$, then there exists an unique geodesic
\begin{equation}\label{geodesic3}
 [\Gamma(t)]=[UB\cos{(S t)}B^T+A\sin{(S t)}B^T] 
\end{equation}
starting from $[U]$ and along with direction $D$. We see that \eqref{geodesic1} is nothing but a special case with direction $D=A_2\Theta A^T$.
\end{remark}

Hereafter, we use macro
\begin{equation}\label{geodesic2}
\exp_{[U]}(t D) := [\Gamma(t)]
\end{equation}
to denote such a geodesic on $\mathcal{G}^N_{N_g}$. 
We now define the parallel mapping which maps a tangent vector along the geodesic $\exp_{[U]}(t D)$ \cite{EAS}.

\begin{definition}
The parallel mapping $\tau_{tD}: \ \mathcal{T}_{[U]}\mathcal{G}^N_{N_g}\to\mathcal{T}_{[exp_{[U]}(tD)]}\mathcal{G}^N_{N_g}$ along $\exp_{[U]}(tD)$ is defined as
\begin{equation*}
\tau_{tD}(\tilde{D})=\big((-U\sin{(St)}+A\cos{(St)}A^T+(I_N-AA^T)\big)\tilde{D},
\end{equation*}
where $D=ASB^T$ is the SVD of $D$.
\end{definition}

It can be verified that $\tau_{tD}\tilde{D}\in\mathcal{T}_{[\exp_{[U]}(tD)]}\mathcal{G}^N_{N_g}$ and
\begin{eqnarray}
&&\|\tau_{tD}(D_1)\|_F=\|D_1\|_F, \\
&&\textup{tr}({\tau_{tD} (D_1)}^T\tau_{tD} (D_2))=\textup{tr}(D_1^TD_2), \forall D_1,D_2\in\mathcal{T}_{[U]}\mathcal{G}^N_{N_g}.
\end{eqnarray}

We have the following proposition for our convergence proof from Remark 3.2 and Remark 4.2 of \cite{Smith94}.
\begin{proposition}\label{Taylor}
For $U\in\mathcal{M}^N_{N_g}$, $D\in\mathcal{T}_{[U]}\mathcal{G}^N_{N_g}$, $t\in(0,\frac{1}{\|D\|_F})$, there exist $\xi_i\in(0,t), i=1,2$ such that
\begin{eqnarray}
\label{taylor1} &&E(\exp_{[U]}(tD)) \notag \\
=&&E(U)+t\langle\nabla_GE(U),D\rangle + \frac{t^2}{2}\nabla^2_GE(U)[D, D] \\ \notag
&+&\frac{t^3}{6}\nabla^3_GE(\exp_{[U]}(\xi_1 D))[\tau_{\xi_1D} (D),\tau_{\xi_1D} (D),\tau_{\xi_1D} (D)],
\end{eqnarray}
and
\begin{eqnarray}
\label{taylor2} &&\langle\tau_{tD}^{-1}\nabla_GE(\exp_{[U]}(tD)), \cdot\rangle  \notag\\
&=&\langle\nabla_GE(U), \cdot\rangle +t\nabla^2_GE(U)[D, \cdot] \\
&&+ \frac{t^2}{2}\nabla_G^3E(\exp_{[U]}(\xi_2 D))[\tau_{\xi_2D} (D), \tau_{\xi_2D} (D), \tau_{\xi_2D}(\cdot)]. \notag
\end{eqnarray}
If the energy functional $E(U)$ is of second order differentiable only, then there holds
\begin{eqnarray}\label{taylor3}
&&E(\exp_{[U]}(tD)) \notag \\
=&&E(U)+t\langle\nabla_GE(U),D\rangle \\
&+&t\int_0^t(1-\frac{s}{t})\nabla_G^2 E(\exp_{[U]}(sD))[\tau_{sD}(D),\tau_{sD}(D)]ds, \notag
\end{eqnarray}
and
\begin{eqnarray}\label{taylor4}
&&\langle\tau_{tD}^{-1}\nabla_GE(\exp_{[U]}(tD)), \cdot\rangle  \notag\\
&=&\langle\nabla_GE(U), \cdot\rangle +\int_0^t \nabla^2_GE(\exp_{[U]}(sD))[\tau_{sD}(D), \tau_{sD}(\cdot)]ds. 
\end{eqnarray}
\end{proposition}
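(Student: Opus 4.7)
The plan is to reduce both Taylor-type expansions to the classical one-variable Taylor theorem applied along the geodesic $\gamma(s):=\exp_{[U]}(sD)$, $s\in[0,t]$. The restriction $t<1/\|D\|_F$ keeps $\gamma$ inside the region where the exponential map is a diffeomorphism, so the parallel transport $\tau_{sD}$ is a well-defined isometry and no conjugate-point issues arise. I would introduce the scalar function $f(s):=E(\gamma(s))$ and, for \eqref{taylor2} and \eqref{taylor4}, the tangent-space-valued function $g(s):=\tau_{sD}^{-1}\nabla_GE(\gamma(s))\in\mathcal{T}_{[U]}\mathcal{G}^N_{N_g}$; the goal is then to establish a third-order expansion of $f$ and a second-order expansion of $g$ about $s=0$, and finally pair $g$ with an arbitrary $\tilde D\in\mathcal{T}_{[U]}\mathcal{G}^N_{N_g}$ to recover the stated identities.

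The heart of the argument is the geodesic identity $\dot\gamma(s)=\tau_{sD}(D)$, which follows because $\gamma$ satisfies $\nabla_{\dot\gamma}\dot\gamma=0$ with $\dot\gamma(0)=D$; consequently $\|\dot\gamma(s)\|_F\equiv\|D\|_F$ and every covariant derivative of $\dot\gamma$ along $\gamma$ vanishes. Successively differentiating $f$ and exploiting this cancellation gives
\[
f'(s)=\langle\nabla_GE(\gamma(s)),\dot\gamma(s)\rangle,\quad f''(s)=\nabla_G^2E(\gamma(s))[\dot\gamma(s),\dot\gamma(s)],\quad f'''(s)=\nabla_G^3E(\gamma(s))[\dot\gamma(s),\dot\gamma(s),\dot\gamma(s)].
\]
Substituting $\dot\gamma(s)=\tau_{sD}(D)$ and applying the ordinary Lagrange-form Taylor theorem of order three to $f$ at $s=t$ yields \eqref{taylor1}, with $\xi_1\in(0,t)$ supplied by the mean-value theorem. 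For the merely $C^2$ setting, the integral form of the remainder $f(t)-f(0)-tf'(0)=t\int_0^t(1-s/t)f''(s)\,ds$ produces \eqref{taylor3} directly, without invoking $f'''$. For \eqref{taylor2} and \eqref{taylor4}, differentiating $g$ and using that $\tau_{sD}^{-1}$ converts the covariant derivative of the vector field $\nabla_GE\circ\gamma$ into an ordinary derivative in the fixed space $\mathcal{T}_{[U]}\mathcal{G}^N_{N_g}$ gives $\langle g'(s),\tilde D\rangle=\nabla_G^2E(\gamma(s))[\dot\gamma(s),\tau_{sD}(\tilde D)]$ and $\langle g''(s),\tilde D\rangle=\nabla_G^3E(\gamma(s))[\dot\gamma(s),\dot\gamma(s),\tau_{sD}(\tilde D)]$, where the swap between $\tau_{sD}^{-1}$ and $\tau_{sD}$ is legitimate because the parallel mapping preserves the Frobenius inner product. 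A first-order Taylor expansion of $g$ with Lagrange (resp.\ integral) remainder, paired with $\tilde D$, then recovers \eqref{taylor2} (resp.\ \eqref{taylor4}).

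The genuine obstacle in a fully self-contained proof is justifying that the algebraic formulas for $\nabla_G^2E$ in \eqref{hes} and for $\nabla_G^3E$ recorded in the preliminaries actually coincide with the intrinsic covariant derivatives that appear when one formally differentiates $f$ and $g$. This requires verifying that the projection $(I-UU^T)$ and the $-D\Sigma$ correction in \eqref{hes} encode the Christoffel symbols of the Grassmann quotient structure, and that the four extra terms in the third-order formula correspond to one further covariant differentiation of the Hessian. Both facts are standard but tedious quotient-manifold computations; since they are exactly the content of Remarks~3.2 and~4.2 of \cite{Smith94}, I would import them wholesale, at which point the proposition collapses to the one-variable Taylor theorems above.
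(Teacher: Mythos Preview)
Your proposal is correct and aligns with the paper's treatment: the paper does not give an independent proof but simply attributes the proposition to Remarks~3.2 and~4.2 of \cite{Smith94}, which is exactly the source you invoke after sketching the one-variable reduction along the geodesic. Your outline of $f(s)=E(\gamma(s))$, $g(s)=\tau_{sD}^{-1}\nabla_GE(\gamma(s))$, and the identity $\dot\gamma(s)=\tau_{sD}(D)$ is precisely the mechanism behind those remarks, so there is no substantive difference in approach.
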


Now, we introduce some assumptions that will be used in our analysis in Section \ref{sec-alg}.
First, we need the following assumption.
\begin{assume}\label{assum_lip}
The gradient of the energy functional is Lipschitz continuous. That is, there exists $L_0>0$
such that
\begin{equation*}
\| \nabla E(U) - \nabla E(V) \|_F \leq L_0\|  U - V \|_F, \ \ \forall \ U, V
\in \mathcal{M}^N_{N_g}.
\end{equation*}
\end{assume}

Note that the same assumption is used and discussed in, for instance, \cite{DLZZ, LWWUY, UWYKL}.
From Assumption \ref{assum_lip}, there is a constant $C_0>0$, such that
\begin{equation}\label{der-bound}
\| \nabla E(\Psi) \|_F \leq C_0, \ \ \forall \ \Psi \in \mathcal{M}^N_{N_g},
\end{equation}
which implies
\begin{equation} \label{lip}
\| \nabla_G E(U) - \nabla_G E(V) \|_F \leq L_1\| U - V \|_F, \ \ \forall \ U, V
\in \mathcal{M}^N_{N_g},
\end{equation}
where $L_1 = L_0 + 2C_0$. That is, the gradient of the energy functional on the
Grassmann manifold is Lipschitz continuous, too.

In addition, we also assume that there exists
a local minimizer $[U^\ast]$ of \eqref{dis-emin}, on which the following assumption will be
imposed.
\begin{assume}\label{assum_hess}
There exists $\delta_1>0$, such that $\forall \ [U] \in B([U^\ast],\delta_1)$,
\begin{eqnarray}\label{pos-bnd}
  \nabla^2_GE(U)[D, D] &\ge& \nu_1\| D \|_F^2, \forall \ D \in \mathcal{T}_{[U]}\mathcal{G}^N_{N_g}\notag
\end{eqnarray}
here $\nu_1>0$ is a constant, and
\begin{equation*}
B([U],\delta) := \{[V]\in \mathcal{G}_{N_g}^N: \textup{dist}_{F}([V],
  [U]) \le \delta\}.
\end{equation*}
\end{assume}

Assumption \ref{assum_hess} typically leads to the following lemma:

\begin{lemma}\label{distbound}
Let Assumption \ref{assum_hess} hold true. If $[U^\ast]$ is a local minimizer of \eqref{dis-emin}, then for all $[U]\in B([U^\ast],\delta_1)$, there holds,
\begin{eqnarray}
\frac{\nu_1}{2} \textup{dist}_{geo}([U],[U^\ast])^2\leq E(U)&-&E(U^\ast)\leq \frac{C}{2} \textup{dist}_{geo}([U],[U^\ast])^2, \\
\|\nabla_G E(U)\|_F&\geq& \nu_1 \textup{dist}_{geo}([U],[U^\ast]).
\end{eqnarray}
\end{lemma}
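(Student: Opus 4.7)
The plan is to exploit the first-order optimality at the local minimizer, namely $\nabla_G E(U^\ast)=0$, and then expand $E$ and $\nabla_G E$ along the minimizing geodesic joining $[U^\ast]$ and $[U]$ using Proposition \ref{Taylor}. Concretely, for $[U]\in B([U^\ast],\delta_1)$ I would pick $D\in\mathcal{D}_{[U^\ast]}$ with $\|D\|_F=\textup{dist}_{geo}([U],[U^\ast])$ so that $\exp_{[U^\ast]}(D)=[U]$, and then run the integral Taylor formulas \eqref{taylor3}--\eqref{taylor4} from the base point $U^\ast$ out to parameter $t=1$.

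For the energy sandwich, the vanishing gradient at $U^\ast$ reduces \eqref{taylor3} to
\begin{equation*}
E(U)-E(U^\ast)=\int_0^1(1-s)\,\nabla_G^2E(\exp_{[U^\ast]}(sD))[\tau_{sD}(D),\tau_{sD}(D)]\,ds.
\end{equation*}
Applying the lower bound from Assumption \ref{assum_hess} and the upper bound \eqref{upb} to the integrand, and using that the parallel transport $\tau_{sD}$ is an isometry so $\|\tau_{sD}(D)\|_F=\|D\|_F$, the integrand is pinched between $\nu_1\|D\|_F^2$ and $C\|D\|_F^2$. Since $\int_0^1(1-s)\,ds=\tfrac12$, this produces the desired two-sided estimate with constants $\nu_1/2$ and $C/2$ after substituting $\|D\|_F=\textup{dist}_{geo}([U],[U^\ast])$.

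For the gradient lower bound, I would use \eqref{taylor4}, again with $\nabla_G E(U^\ast)=0$, to obtain
\begin{equation*}
\langle\tau_D^{-1}\nabla_G E(U),\cdot\rangle=\int_0^1\nabla_G^2E(\exp_{[U^\ast]}(sD))[\tau_{sD}(D),\tau_{sD}(\cdot)]\,ds,
\end{equation*}
and then test this identity against the direction $D\in\mathcal{T}_{[U^\ast]}\mathcal{G}^N_{N_g}$. The right-hand side becomes the same integrand used above, which is bounded below by $\nu_1\|D\|_F^2$. Combining with Cauchy--Schwarz and the isometry $\|\tau_D^{-1}\nabla_G E(U)\|_F=\|\nabla_G E(U)\|_F$ yields $\|\nabla_G E(U)\|_F\,\|D\|_F\ge\nu_1\|D\|_F^2$, which divides out to give the claim.

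The one technical point I expect to be the main obstacle is ensuring that every intermediate point $\exp_{[U^\ast]}(sD)$, $s\in[0,1]$, still lies in the ball on which Assumption \ref{assum_hess} provides positivity: since the ball is defined in $\textup{dist}_F$ while the expansion moves along $\textup{dist}_{geo}$, the equivalence $\textup{dist}_F\le\textup{dist}_{geo}\le 2\,\textup{dist}_F$ from the Remark forces one to either shrink $\delta_1$ by a factor $2$ or reinterpret the ball in geodesic distance so that the Hessian estimate is guaranteed along the whole geodesic. Everything else is a clean consequence of the Taylor formulas combined with the isometry of $\tau_{sD}$.
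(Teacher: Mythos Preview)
Your proposal is correct and follows essentially the same route as the paper: expand from the minimizer along the geodesic to $[U]$, kill the first-order term via $\nabla_G E(U^\ast)=0$, and bound the second-order remainder using Assumption \ref{assum_hess} below and \eqref{upb} above, then test the gradient expansion against $D$ via Cauchy--Schwarz and the isometry of parallel transport. The only cosmetic difference is that the paper invokes the Lagrange-remainder form (an intermediate point $\xi_i$) rather than the integral form \eqref{taylor3}--\eqref{taylor4} you use; the technical concern you flag about intermediate points staying in $B([U^\ast],\delta_1)$ is real and is glossed over in the paper as well.
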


\begin{proof}
For all $[U]\in B([U^\ast],\delta_1)$, there exists an unique geodesic $\exp_{[U^\ast]}(tD)$ such that $\exp_{[U^\ast]}(D)=U$ and $\textup{dist}_{geo}([U], [U^\ast])=\|D\|_F$.
By using Proposition \ref{Taylor} and the fact that $\|\nabla_G E(U^\ast)\|_F=0$, we get
\begin{eqnarray*}
E(U)-E(U^\ast)&=&\frac{1}{2}\nabla^2_GE(\exp_{[U^\ast]}(\xi_1 D))[\tau_{\xi_1D} (D),\tau_{\xi_1D} (D)], \\
\langle\tau_1^{-1}\nabla_GE(U), \cdot\rangle&=&\nabla^2_GE(\exp_{[U^\ast]}(\xi_2 D))[\tau_{\xi_2D}(D), \tau_{\xi_2D}(\cdot)].
\end{eqnarray*}
Thus,
$$\|\nabla_GE(U)\|_F=\|\tau_1^{-1}\nabla_GE(U)\|_F\geq\frac{\langle\tau_1^{-1}\nabla_GE(U), D\rangle}{\|D\|_F}=\frac{\nabla^2_GE(\exp_{[U^\ast]}(\xi_2 D))[\tau_{\xi_2D}(D), \tau_{\xi_2D}(D)]}{\|D\|_F},$$
which together with \eqref{upb} and Assumption \ref{assum_hess} completes the proof.
\end{proof}

\begin{remark}
We verify from \eqref{taylor3} and \eqref{taylor4} that Lemma \ref{distbound} holds true even if the total energy functional $E(U)$ is only of second order differentiable.
\end{remark}

\section{The Newton method} \label{sec-alg}
For any iteration point $U\in \mathcal{M}^N_{N_g}$, the central computation of a Newton algorithm is determining search direction $D\in\mathcal{T}_{[U]}\mathcal{G}^N_{N_g}$ such that
\begin{equation}\label{newton-equation}
\nabla^2_G E(U)[D]+\nabla_GE(U)=\bf{0}.
\end{equation}
In practice, it is usually impossible to get the exact solution of \eqref{newton-equation}. We may instead require \eqref{newton-equation} to be solved approximately
such that the search direction $D\in\mathcal{T}_{[U]}\mathcal{G}^N_{N_g}$ satisfies
\begin{equation}\label{sd}
\|\nabla^2_G E(U)[D]+\nabla_GE(U)\|_F\le\sigma\|\nabla_G E(U)\|_F,
\end{equation}
for some $\sigma\in(0,\frac{1}{2})$.

After a suitable direction $D$ is found, we note that $U + tD$ no longer belongs to Stiefel manifold $\mathcal{M}^N_{N_g}$ as long as $tD\ne{\bf 0}$. Therefore, some orthogonalization strategies, which are called ``retraction", are then required to be applied \cite{AbMaSe}.

\setcounter{footnote}{0}

For any given manifold $\mathcal{M}$, $U\in\mathcal{M}$ and operator $\mathcal{K}:\mathcal{T}_U\mathcal{M} \to \mathcal{M}$, we denote the derivative of $\mathcal{K}$ by $\textup{d}\mathcal{K}(\hat{D}):\mathcal{T}_{\hat{D}}\mathcal{T}_{U}\mathcal{M}\to\mathcal{T}_{U}\mathcal{M}$\footnote[1]{For any $\hat{D}\in\mathcal{T}_{U}\mathcal{M}$, the linear space $\mathcal{T}_{\hat{D}}\mathcal{T}_U\mathcal{M}$ is isomorphic to $\mathcal{T}_U\mathcal{M}$. Hence, $\textup{d}\mathcal{K}$ can be viewed as a mapping within $\mathcal{T}_{U}\mathcal{M}$.}, which satisfies
\begin{equation}\label{differential}
\lim_{\|\delta D\|\to 0}\frac{\|\mathcal{K}(\hat{D}+\delta D) - \mathcal{K}(\hat{D}) - \textup{d}\mathcal{K}(\hat{D})[\delta D]\|}{\|\delta D\|}=0.
\end{equation}
The ``retraction" is then defined as follows \cite{AbMaSe}.

\begin{definition}
A retraction $\mathcal{R}: \ \mathcal{T}\mathcal{M} \to \mathcal{M}$ on a manifold $\mathcal{M}$ is a smooth mapping satisfying
\begin{eqnarray*}
\mathcal{R}_{U}({\bf 0}) &=& U, \\
\textup{d}\mathcal{R}_{U}({\bf 0}) &=& \textup{Id}_{\mathcal{T}_{U}\mathcal{M}},
\end{eqnarray*}
where $\mathcal{R}_{U}$ is the restriction of $\mathcal{R}$ to $\mathcal{T}_{U}\mathcal{M}$ when $U\in\mathcal{M}$, $\bf 0$ denotes the zero element in $\mathcal{T}_{U}\mathcal{M}$, and $\textup{Id}_{\mathcal{T}_{U}\mathcal{M}}$ is the identity mapping on $\mathcal{T}_{U}\mathcal{M}$.
\end{definition}

For simplicity, we introduce a macro $\text{ortho}(U,D,t)$ to denote one step starting from point $U \in \mathcal{M}^N_{N_g}$ with search direction $D$ and step size $t$ to next  point, which is also in $\mathcal{M}^N_{N_g}$. More specifically, to be a retraction, operator $\text{ortho}(U,D,t)$ should satisfy
\begin{eqnarray}\label{retraction}
\text{ortho}(U,D,0)=U, \\
\frac{\partial}{\partial t}\textup{ortho}(U,D,0)=D. \notag
\end{eqnarray}
If \eqref{retraction} holds true for all $U\in\mathcal{M}^N$ and $D\in\mathcal{T}_{[U]}\mathcal{G}^N$, then the operator $\text{ortho}(U,D,t)$ is indeed a retraction \cite{AbMaSe}.

Another important issue in a Newton method is the step size. We notice that the initial step sizes for Newton methods are often chosen as unit step size, i.e., constant step size 1
(c.f., \cite{EAS, HMWY, Smith93, Smith94, ZBJ}) because \eqref{newton-equation} has indeed found a minimizer of second order Taylor expansion of the objective function with respect to the search direction under the constraint that the step size equal to 1.
Due to the fact that we do not require the Newton direction to be obtained exactly, we choose the initial step size at the $n$-th iteration to be
\begin{equation}\label{init_step}
t_n^{\textup{init}} = \frac{-\langle\nabla_G E(U_n), D_n\rangle}{\nabla_G^2 E(U_n)[D_n, D_n]},
\end{equation}
which is the so-called Hessian based step size in \cite{DLZZ}. Note that if $D_n$ solves \eqref{newton-equation}, then $t_n^{\textup{init}} = 1$ by its definition, which degenerates to the step size choice of classic Newton method. The monotone backtracking procedure is then applied to ensure the convergence of the algorithm.
We can then propose our Newton algorithm with macro $\textup{ortho}(U, D, t)$ as Algorithm \ref{Newton}.

\begin{algorithm}\label{Newton}
\caption{Newton method}
 Give $\epsilon,  q\in (0,1), \eta \in (0,\frac{1}{4})$,
  initial data $U_0, \ s.t. \  U_0^TU_0 = I_N$,
  calculate gradient $\nabla_G E(U_0)$, let $n = 0$\;
 \While{$\| \nabla_G E(U_n) \|_F> \epsilon$}{
  Choose a suitable $\sigma_n\in(0,1)$;

  Find $D_n\in\mathcal{T}_{[U_n]}\mathcal{G}^N_{N_g}$
  such that
  \begin{equation}\label{directionappro}
\|\nabla^2_G E(U_n)[D_n]+\nabla_GE(U_n)\|_F\le\sigma_n\|\nabla_G E(U_n)\|_F;
\end{equation}

Calculate the step size
     \begin{equation*}
       t_n = t_n^{\textup{init}}q^{m_n},
     \end{equation*}
     where $t_n^{\textup{init}}$ is defined in \eqref{init_step} and $m_n \in \mathbb{N}$ is the smallest nonnegative integer satisfying
     \begin{align}\label{armijo}
  E(\textup{ortho}(U_n,D_n,t_n)) \leq  \ E(U_n) + \eta t_n\langle\nabla_G E(U_n),  D_n\rangle;
\end{align}

  Update
      $U_{n+1}=\textup{ortho}(U_n,D_n,t_n)$;

  Let $n=n+1$, calculate gradient $\nabla_G E(U_n)$\;
 }
\end{algorithm}

\subsection{Convergence}
To ensure the convergence of Algorithm \ref{Newton}, we require that the retraction we use in Algorithm \ref{Newton} satisfies the following assumption which has been previously used in \cite{DLZZ, JD}.

\begin{assume}\label{diff}
There exists a constant $C_1 > 0$, such that
\begin{equation}\label{prop4.1}
\| \textup{ortho}(U, D, t) - U \|_F \le C_1t \|  D \|_F, \ \forall \  t\ge0,
\end{equation}
and
\begin{equation}\label{prop4.2}
\begin{split}
 \| \frac{\partial}{\partial t}\textup{ortho}(U,D,t) - D \|_F \le C_1t \| D\|_F^2, \ \forall \  t\ge0.
\end{split}
\end{equation}
\end{assume}

\begin{remark}
It has been proven in \cite{DLZZ} that Assumption \ref{diff} holds true for several retractions including the so-called QR, PD and WY strategies. We refer to Appendix \ref{app_retraction} for more details.
\end{remark}

\begin{remark}
Assumption \ref{diff} leads to
\begin{eqnarray*}
\|\textup{ortho}(U,D,t)-U-tD\|_F&=&t\| \frac{\partial}{\partial t}\textup{ortho}(U,D,\xi)-D\|_F \\ \notag
&\leq& C_1t\xi\|D\|_F^2\leq C_1t^2\|D\|_F^2.
\end{eqnarray*}
Thus, for any orthogonality preserving strategy $\textup{ortho}_1(U,D,t)$ and $\textup{ortho}_2(U,D,t)$ satisfying Assumption \ref{diff}, there holds
\begin{eqnarray}\label{differfromuptd}
&&\|\textup{ortho}_1(U,D,t)-\textup{ortho}_2(U,D,t)\|_F \notag \\
&\leq&\|\textup{ortho}_1(U,D,t)-U-tD\|_F + \|\textup{ortho}_2(U,D,t)-U-tD\|_F \notag \\
&\leq& 2C_1t^2\|D\|_F^2.
\end{eqnarray}
\end{remark}

Before starting to prove the convergence, we need the following estimation.
\begin{lemma}\label{stepsizebound}
Let Assumption \ref{assum_hess} hold true. If $[U] \in B([U^\ast],\delta_1)$ and $D\in\mathcal{T}_{[U]}\mathcal{G}_{N_g}^N$ satisfy \eqref{sd}, then
\begin{equation}\label{EequivD}
\frac{\nu_1}{1+\sigma}\|D\|_F\le\|\nabla_G E(U)\|_F\le\frac{C}{1-\sigma}\|D\|_F,
\end{equation}
and
$$|\frac{-\langle\nabla_G E(U), D\rangle}{\nabla_G^2 E(U)[D, D]}-1|\le \frac{C\sigma}{\nu_1(1-\sigma)}.$$
\end{lemma}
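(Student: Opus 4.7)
The proof is essentially a clean unpacking of inequality \eqref{sd} paired with the two-sided bounds on the Grassmannian Hessian (Assumption \ref{assum_hess} from below and \eqref{upb} from above). My plan has three steps, handled in order.

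\textbf{Step 1: The lower bound $\|\nabla_G E(U)\|_F \ge \frac{\nu_1}{1+\sigma}\|D\|_F$.} First I would take the inner product of the residual $\nabla_G^2 E(U)[D] + \nabla_G E(U)$ with $D$ and apply Cauchy--Schwarz, which together with \eqref{sd} gives
\begin{equation*}
\bigl|\nabla_G^2 E(U)[D,D] + \langle \nabla_G E(U), D\rangle\bigr| \le \sigma \|\nabla_G E(U)\|_F \|D\|_F.
\end{equation*}
Rearranging yields $-\langle \nabla_G E(U), D\rangle \ge \nabla_G^2 E(U)[D,D] - \sigma \|\nabla_G E(U)\|_F \|D\|_F$. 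Since $[U]\in B([U^\ast],\delta_1)$, Assumption \ref{assum_hess} bounds the right-hand side below by $\nu_1 \|D\|_F^2 - \sigma \|\nabla_G E(U)\|_F \|D\|_F$. Bounding the left-hand side above by Cauchy--Schwarz gives $\|\nabla_G E(U)\|_F \|D\|_F$, so $(1+\sigma)\|\nabla_G E(U)\|_F \|D\|_F \ge \nu_1 \|D\|_F^2$, and division by $\|D\|_F$ finishes the bound.

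\textbf{Step 2: The upper bound $\|\nabla_G E(U)\|_F \le \frac{C}{1-\sigma}\|D\|_F$.} By the triangle inequality and \eqref{sd},
\begin{equation*}
\|\nabla_G E(U)\|_F \le \|\nabla_G^2 E(U)[D]\|_F + \|\nabla_G^2 E(U)[D] + \nabla_G E(U)\|_F \le \|\nabla_G^2 E(U)[D]\|_F + \sigma \|\nabla_G E(U)\|_F.
\end{equation*}
The operator bound \eqref{upb} gives $\|\nabla_G^2 E(U)[D]\|_F \le C\|D\|_F$, so $(1-\sigma)\|\nabla_G E(U)\|_F \le C\|D\|_F$.

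\textbf{Step 3: The step-size estimate.} Writing $t^{\textup{init}} = -\langle \nabla_G E(U), D\rangle / \nabla_G^2 E(U)[D,D]$, I would combine numerator and denominator into one quotient:
\begin{equation*}
t^{\textup{init}} - 1 = \frac{-\langle \nabla_G E(U) + \nabla_G^2 E(U)[D], D\rangle}{\nabla_G^2 E(U)[D,D]}.
\end{equation*}
Cauchy--Schwarz with \eqref{sd} bounds the numerator in absolute value by $\sigma \|\nabla_G E(U)\|_F \|D\|_F$, while Assumption \ref{assum_hess} bounds the denominator below by $\nu_1 \|D\|_F^2$. Hence $|t^{\textup{init}} - 1| \le \sigma \|\nabla_G E(U)\|_F /(\nu_1 \|D\|_F)$, and inserting the upper bound from Step 2 gives the claimed $C\sigma/(\nu_1(1-\sigma))$.

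There is no genuine obstacle here; the only subtle point is remembering to consume \eqref{sd} twice, once tested against $D$ (which extracts the scalar directional content needed for Step 1 and Step 3) and once as a full Frobenius residual (needed for Step 2). All constants line up without any further auxiliary lemmas, so the proof should be just a few lines.
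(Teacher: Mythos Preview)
Your proposal is correct and follows essentially the same approach as the paper. The only cosmetic difference is in Step~1: the paper applies the reverse triangle inequality directly to the Frobenius norms in \eqref{sd} to get $(1+\sigma)\|\nabla_G E(U)\|_F \ge \|\nabla_G^2 E(U)[D]\|_F$ and then bounds $\|\nabla_G^2 E(U)[D]\|_F \ge \nabla_G^2 E(U)[D,D]/\|D\|_F \ge \nu_1\|D\|_F$, whereas you first test against $D$ and use Cauchy--Schwarz on both sides; the constants and logic coincide.
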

\begin{proof}
We see from \eqref{sd} that
\begin{equation*}
\sigma\|\nabla_G E(U)\|_F\ge\|\nabla^2_G E(U_n)[D_n]\|_F-\|\nabla_G E(U)\|_F,
\end{equation*}
which together with \eqref{pos-bnd} leads to
\begin{equation*}
\|\nabla_G E(U)\|_F\ge\frac{1}{1+\sigma}\|\nabla^2_G E(U)[D]\|_F\ge\frac{\|\nabla^2_G E(U)[D, D]\|_F}{(1+\sigma)\|D\|_F}\ge\frac{\nu_1}{1+\sigma}\|D\|_F.
\end{equation*}
Similarly, we have
$$(1-\sigma)\|\nabla_G E(U)\|_F\le\|\nabla^2_G E(U)[D]|_F\le C\|D\|_F,$$
i.e.,
$$\|\nabla_G E(U)\|_F\le\frac{C}{1-\sigma}\|D\|_F.$$
In addition, we obtain from \eqref{sd} that
$$|\langle\nabla_G E(U), D\rangle+\nabla_G^2 E(U)[D, D]|\le\sigma\|\nabla_G E(U)\|_F\|D\|_F,$$
or equivalently,
$$|\frac{-\langle\nabla_G E(U), D\rangle}{\nabla_G^2 E(U)[D, D]}-1|\le\frac{\sigma\|\nabla_G E(U)\|_F\|D\|_F}{\nabla_G^2 E(U)[D, D]}.$$
By using 
\eqref{pos-bnd} and \eqref{EequivD}, we get that
$$\frac{\|\nabla_G E(U)\|_F\|D\|_F}{\nabla_G^2 E(U)[D, D]}\le\frac{\|\nabla_G E(U)\|_F}{\nu_1\|D\|_F}\le\frac{C}{\nu_1(1-\sigma)}$$
and hence complete the proof.
\end{proof}

Now we turn to show our theory. To use Assumption \ref{assum_hess} in our analysis, we first provide a sufficient condition for keeping every iteration point in $B([U^\ast], \delta_1)$ as long as initial guess $U_0$ is close enough to $U^\ast$.

We see from Assumption \ref{assum_hess} that, for any $\delta_2 \in (0, \delta_1/(1+\frac{3C_1}{\nu_1}L_1))$,  there exists an $E_0>E(U^\ast)$ and the corresponding level set
\begin{equation}\label{level-set-L}
\mathcal{L} = \{[U]\in \mathcal{G}_{N_g}^N: E(U)\le E_0\},
\end{equation}
 such that
\begin{equation}\label{level}
\{[U]: [U] \in \mathcal{L} \cap B([U^\ast], \delta_1)\}
\subset B([U^\ast], \delta_2).
\end{equation}
We have the following lemma.

\begin{lemma} \label{lema1}
Let Assumptions \ref{assum_lip} and \ref{assum_hess} hold true. If $[U_0] \in B([U^\ast], \delta_2) \cap \mathcal{L}$, then there exists a sequence $\{\sigma_n\}_{n\in \mathbb{N}_0}\subset(0,1)$
such that for the sequence $\{U_n\}_{n\in \mathbb{N}_0}$ generated by
Algorithm \ref{Newton}, there holds $$[U_n] \in B([U^\ast], \delta_2) \cap \mathcal{L} \subset B([U^\ast], \delta_1), \forall \ n \in \mathbb{N}_{0}.$$
\end{lemma}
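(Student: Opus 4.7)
My plan is to proceed by induction on $n$, where the base case $n=0$ is the hypothesis. For the inductive step, I assume $[U_n]\in B([U^\ast],\delta_2)\cap\mathcal{L}$, which, by \eqref{level}, automatically puts $[U_n]$ in $B([U^\ast],\delta_1)$ so that Assumption \ref{assum_hess} and Lemma \ref{stepsizebound} become available. The energy side $[U_{n+1}]\in\mathcal{L}$ is immediate from the Armijo condition \eqref{armijo}, once one knows that backtracking terminates. Containment $[U_{n+1}]\in B([U^\ast],\delta_2)$ will then follow from \eqref{level} as soon as we prove $[U_{n+1}]\in B([U^\ast],\delta_1)$; the whole game is thus to control the size of a single Newton step via the triangle inequality.

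To bound $\|U_{n+1}-U_n\|_F$ I would combine three estimates. First, Assumption \ref{diff} gives $\|U_{n+1}-U_n\|_F\le C_1 t_n\|D_n\|_F$. Second, choosing representatives so that $\|U_n-U^\ast\|_F=\textup{dist}_F([U_n],[U^\ast])\le\delta_2$ and using $\nabla_G E(U^\ast)=\mathbf{0}$ together with the Lipschitz bound \eqref{lip} (which is invariant under right-multiplication by $\mathcal{O}^N$) gives $\|\nabla_G E(U_n)\|_F\le L_1\delta_2$; Lemma \ref{stepsizebound} then yields $\|D_n\|_F\le\frac{1+\sigma_n}{\nu_1}L_1\delta_2$. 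Third, since backtracking can only shrink the step, $t_n\le t_n^{\textup{init}}\le 1+\frac{C\sigma_n}{\nu_1(1-\sigma_n)}$, again by Lemma \ref{stepsizebound}. Multiplying these bounds,
\[
\|U_{n+1}-U_n\|_F \le \frac{C_1 L_1}{\nu_1}(1+\sigma_n)\Bigl(1+\frac{C\sigma_n}{\nu_1(1-\sigma_n)}\Bigr)\delta_2.
\]

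The remaining task is to use the freedom in choosing $\sigma_n$. I would fix $\sigma_n$ small enough to ensure two things simultaneously. First, \eqref{sd} combined with \eqref{pos-bnd} yields $\langle\nabla_G E(U_n),D_n\rangle\le-\nu_1\|D_n\|_F^2+\sigma_n\|\nabla_G E(U_n)\|_F\|D_n\|_F\le(-\nu_1+\tfrac{C\sigma_n}{1-\sigma_n})\|D_n\|_F^2$, which is strictly negative for small $\sigma_n$, making $D_n$ a descent direction and guaranteeing the Armijo backtracking terminates. Second, the coefficient $(1+\sigma_n)\bigl(1+\frac{C\sigma_n}{\nu_1(1-\sigma_n)}\bigr)$ can be forced $\le 3$, whence the triangle inequality gives
\[
\textup{dist}_F([U_{n+1}],[U^\ast])\le \|U_{n+1}-U_n\|_F + \|U_n-U^\ast\|_F \le \Bigl(1+\frac{3C_1 L_1}{\nu_1}\Bigr)\delta_2 \le \delta_1,
\]
by the very choice of $\delta_2$. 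Together with $E(U_{n+1})\le E(U_n)\le E_0$, this closes the induction via \eqref{level}.

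The main obstacle is the bookkeeping between the Stiefel-level objects $U_n$, $D_n$, $\nabla_G E(U_n)$ and their Grassmann quotients: the Lipschitz estimate \eqref{lip} must be invoked with representatives that actually realize $\textup{dist}_F([U_n],[U^\ast])$, and the inequality $\textup{dist}_F([U_{n+1}],[U_n])\le\|U_{n+1}-U_n\|_F$ must be used to transfer the retraction-level displacement into the quotient. A secondary concern is that the constraint on $\sigma_n$ must be derivable \emph{a priori} from the fixed constants $C,C_1,L_1,\nu_1$ rather than from the iterate itself, so that the algorithm can genuinely select such $\sigma_n$ without circular reasoning; this is why the definition of $\delta_2$ already absorbs the factor $3C_1 L_1/\nu_1$ and not a coefficient depending on the iteration.
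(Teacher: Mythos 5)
Your proposal is correct and follows essentially the same inductive argument as the paper's proof: bound $\|U_{n+1}-U_n\|_F$ via Assumption~\ref{diff}, Lemma~\ref{stepsizebound}, and the Lipschitz estimate \eqref{lip}, then close the induction with the triangle inequality, the Armijo descent, and \eqref{level}. The only cosmetic difference is that you retain the sharper bound $t_n^{\textup{init}}\le 1+\frac{C\sigma_n}{\nu_1(1-\sigma_n)}$ where the paper coarsens to $t_n\le 2$ under $\sigma_n\le\frac{\nu_1}{\nu_1+C}$; both lead to a coefficient at most $3$ and hence to the same conclusion via the choice of $\delta_2$.
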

\begin{proof}
Let us prove the conclusion by induction. Since $[U_0] \in
B([U^\ast], \delta_2)\cap \mathcal{L}$,
we see that the conclusion is true for $n=0$. We assume that $[U_n] \in
B([U^\ast], \delta_2) \cap\mathcal{L}$, which implies
$[U_n] \in B([U^\ast], \delta_1)$. Lemma \ref{stepsizebound} then gives that
\begin{equation}\label{DleE}
\|D_n\|\le\frac{1+\sigma_n}{\nu_1}\|\nabla_G E(U_n)\|,
\end{equation}
and $t_n\le t_n^{\textup{init}}\le2$ as long as $\sigma_n\le\frac{\nu_1}{\nu_1+C}.$
Hence, we obtain from Assumption \ref{diff} that
\begin{align*}
\| U_{n+1} - U_n\|_F  & \le C_1 t_n\| D_n\|_F  \le 2C_1 \| D_n\|_F  \\
                      & \le \frac{2C_1(1+\sigma_n)}{\nu_1}\| \nabla_G E(U_n)\|_F.
\end{align*}
By the definition of $\text{dist}_{F}([U_n], [U^\ast])$, there exists  $P_n \in \mathcal{O}^{N}$,
such that
\begin{eqnarray*}
 \text{dist}_{F}([U_n], [U^\ast]) = \| U_n - U^\ast P_n \|_F,
\end{eqnarray*}
which together with $\nabla_G E(U^\ast) = 0$ and Assumption
\ref{assum_lip} leads to
 \begin{eqnarray*}
  \| U_{n+1} - U_n\|_F &\le& \frac{2C_1(1+\sigma_n)}{\nu_1}\| \nabla_G E(U_n)- \nabla_G E(U^\ast)P_n\|_F  \\
    &=& \frac{2C_1(1+\sigma_n)}{\nu_1}\| \nabla_G E(U_n)- \nabla_G E(U^\ast P_n)\|_F \\
    &\le& \frac{2C_1(1+\sigma_n)}{\nu_1}L_1\| U_n - U^\ast P_n\|_F  \le \frac{2C_1(1+\sigma_n)}{\nu_1}L_1\delta_2,
\end{eqnarray*}
where the assumption that $[U_n] \in B([U^\ast], \delta_2) \cap \mathcal{L}$ is used in the last inequality.
Consequently,
\begin{eqnarray*}
\text{dist}_{F}([U_{n+1}], [U^\ast]) &\leq & \| U_{n+1}-U^\ast P_n\|_F  \\
 &\leq & \| U_{n+1}-U_n\|_F+\| U_n-U^\ast P_n\|_F \\
  &\le& \| U_{n+1} - U_n\|_F + \delta_2 \le (1+\frac{2(1+\sigma_n)C_1}{\nu_1}L_1)\delta_2\le \delta_1,
\end{eqnarray*}
which means that $[U_{n+1}] \in B([U^\ast], \delta_1)$.

Besides, We obtain from \eqref{directionappro} and the triangular inequality that
\begin{equation}\label{resi_bound1}
|\langle\nabla_G E(U_n),D_n\rangle+\nabla^2_G E(U_n)[D_n, D_n]|\le\sigma_n\|\nabla_G E(U_n)\|_F\|D_n\|_F,
\end{equation}
and from Lemma \ref{stepsizebound} that
\begin{equation}\label{EleD}
\|\nabla_G E(U_n)\|_F\le\frac{C}{1-\sigma_n}\|D_n\|_F.
\end{equation}
Inserting \eqref{EleD} into \eqref{resi_bound1}, we get
\begin{equation}\label{absbound}
|\langle\nabla_G E(U_n),D_n\rangle+\nabla^2_G E(U_n)[D_n, D_n]|\le\frac{\sigma_n}{1-\sigma_n}C\|D_n\|_F^2,
\end{equation}
which indicates that for any $\sigma_n\le\frac{\nu_1}{\nu_1+C}$,
\begin{eqnarray*}
\langle\nabla_G E(U_n),D_n\rangle&\le&\frac{\sigma_n}{1-\sigma_n}C\|D_n\|_F^2-\nabla^2_G E(U_n)[D_n, D_n] \\&\le&(\frac{\sigma_nC}{1-\sigma_n}-\nu_1)\|D_n\|_F^2\le0.
\end{eqnarray*}
Noticing that \eqref{armijo} gives
\begin{align*}
  E(U_{n+1}) \leq  \ E(U_n) + \eta t_n\langle\nabla_G E(U_n),D_n\rangle  \leq E(U_n),
\end{align*}
we have $[U_{n+1}] \in B([U^\ast], \delta_1) \cap \mathcal{L}\subset B([U^\ast], \delta_2)$ where \eqref{level} is used. Finally, we obtain $[U_{n+1}] \in B([U^\ast], \delta_1) \cap \mathcal{L}\subset B([U^\ast], \delta_2)\cap \mathcal{L}$ and complete the proof.
\end{proof}

We are now able to show our main result which can be proved by the similar approach as that in Theorem 2.7 of \cite{DZZ}.

\begin{theorem}\label{thm-conv}
Let Assumptions \ref{assum_lip} and \ref{assum_hess} hold true. If retraction $\textup{ortho}(U, D, t)$ is chosen to satisfy Assumption \ref{diff}, $[U_0] \in B([U^\ast], \delta_2) \cap \mathcal{L}$, then there exists a sequence $\{\sigma_n\}_{n\in \mathbb{N}_0}\subset(0,1)$ such that for the sequence $\{U_n\}_{n\in \mathbb{N}_0}$  generated by Algorithm \ref{Newton}, there holds either $\|\nabla_G E(U_n)\|_F = 0$ for some positive integer $n$ or
\begin{equation}\label{eq-conv}
\lim_{n\to\infty} \| \nabla_G E(U_n)\|_F  = 0.
\end{equation}
\end{theorem}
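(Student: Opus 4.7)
The plan is to follow the standard template for monotone descent with an Armijo backtracking line search, adapted to the Grassmann-manifold setting. First I would record the setup: Lemma \ref{lema1} guarantees that every iterate $[U_n]$ stays in $B([U^\ast],\delta_1)\cap\mathcal{L}$, so Assumption \ref{assum_hess} is available uniformly in $n$, and Lemma \ref{stepsizebound} applies. From \eqref{absbound} I would choose $\{\sigma_n\}$ satisfying $\sigma_n \le \sigma^\ast$ with $\sigma^\ast \in (0,\nu_1/(\nu_1+C))$ fixed, so that
\begin{equation*}
\langle\nabla_G E(U_n),D_n\rangle \;\le\; -\nu_2\|D_n\|_F^2, \qquad \nu_2 := \nu_1 - \tfrac{\sigma^\ast C}{1-\sigma^\ast} > 0,
\end{equation*}
and also $t_n^{\textup{init}} \in [1-\kappa,1+\kappa]$ for some fixed $\kappa<1$ by the second part of Lemma \ref{stepsizebound}. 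This rules out degenerate directions and degenerate initial step sizes.

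Next I would show the backtracking procedure terminates uniformly, i.e., $t_n \ge t_{\min}$ for some $t_{\min}>0$ independent of $n$. For this I would expand $E(\textup{ortho}(U_n,D_n,t))-E(U_n)$ along the retraction path. Using Assumption \ref{diff} in the form $\textup{ortho}(U_n,D_n,t)-U_n = tD_n+R(t)$ with $\|R(t)\|_F \le C_1 t^2\|D_n\|_F^2$, together with Assumption \ref{assum_lip} (which by \eqref{lip} implies a quadratic upper bound on $E$), one obtains
\begin{equation*}
E(\textup{ortho}(U_n,D_n,t))-E(U_n) \;\le\; t\langle\nabla_G E(U_n),D_n\rangle + M t^2\|D_n\|_F^2
\end{equation*}
for all sufficiently small $t$, where $M$ depends only on $L_0$, $C_0$, $C_1$ and the uniform gradient bound \eqref{der-bound}; here I use that $\langle\nabla E(U_n),D_n\rangle=\langle\nabla_G E(U_n),D_n\rangle$ because $D_n^TU_n=0$. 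Combining with the descent estimate above, the Armijo condition \eqref{armijo} is satisfied whenever $(1-\eta)\nu_2\|D_n\|_F^2 \ge Mt\|D_n\|_F^2$, i.e., for all $t\le (1-\eta)\nu_2/M$. Hence the backtracking halts at some $m_n$ with
\begin{equation*}
t_n \;\ge\; q\cdot\min\Bigl(t_n^{\textup{init}},\,\tfrac{(1-\eta)\nu_2}{M}\Bigr) \;\ge\; t_{\min} > 0.
\end{equation*}

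The final step is the standard summability argument. The Armijo condition together with the descent estimate gives
\begin{equation*}
E(U_n)-E(U_{n+1}) \;\ge\; \eta t_{\min}\nu_2\|D_n\|_F^2.
\end{equation*}
Since $\{E(U_n)\}$ is monotonically decreasing and bounded below by $E(U^\ast)$, summing over $n$ yields $\sum_n \|D_n\|_F^2 < \infty$, hence $\|D_n\|_F\to 0$. Finally the upper estimate in \eqref{EequivD} of Lemma \ref{stepsizebound} gives $\|\nabla_G E(U_n)\|_F \le \frac{C}{1-\sigma^\ast}\|D_n\|_F \to 0$, which is \eqref{eq-conv}. The case $\|\nabla_G E(U_n)\|_F = 0$ for some finite $n$ is precisely the termination branch of Algorithm \ref{Newton}.

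The main obstacle I anticipate is the second step: getting the quadratic Armijo expansion along a general retraction $\textup{ortho}(U_n,D_n,t)$ rather than along the Grassmann geodesic $\exp_{[U_n]}(tD_n)$ for which Proposition \ref{Taylor} directly applies. The cleanest route is the estimate \eqref{differfromuptd} combined with Assumption \ref{assum_lip} and the boundedness \eqref{der-bound} to absorb everything beyond the first-order term into $Mt^2\|D_n\|_F^2$; the constant $M$ must be shown to depend only on globally available data so that $t_{\min}$ is independent of $n$. Once this uniform step-size lower bound is in hand, the rest of the argument is routine.
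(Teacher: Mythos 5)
Your proposal is correct and follows the same overall strategy as the paper: first establish via Lemma \ref{lema1} and the choice of $\{\sigma_n\}$ that the iterates stay in $B([U^\ast],\delta_1)$, then derive the sufficient-decrease bound $\langle\nabla_G E(U_n),D_n\rangle\le -\nu_2\|D_n\|_F^2$ together with the two-sided comparability of $\|D_n\|_F$ and $\|\nabla_GE(U_n)\|_F$ from Lemma \ref{stepsizebound}, and finally run the standard Armijo/Zoutendijk summability argument. The only difference is one of detail level: the paper checks the same three hypotheses (sufficient descent, the angle condition $-\langle\nabla_GE(U_n),D_n\rangle/\|\nabla_GE(U_n)\|_F^2\ge\mathrm{const}>0$, and $\|D_n\|_F\lesssim\|\nabla_GE(U_n)\|_F\le C_0$) and then cites Theorem 2.7 of \cite{DZZ} to conclude, whereas you re-derive that line-search theorem in place — proving the uniform step-size lower bound via the descent-lemma expansion $E(\textup{ortho}(U_n,D_n,t))\le E(U_n)+t\langle\nabla_GE(U_n),D_n\rangle+Mt^2\|D_n\|_F^2$, which is exactly the content the paper delegates to the reference.
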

\begin{proof}
Note that Lemma \ref{lema1} gives that $[U_n]\in B([U^\ast], \delta_1), n\in\mathbb{N}_0$  as long as $\sigma_n\le\frac{\nu_1}{\nu_1+C}~(n=0,1,2,\cdots)$. Hence \eqref{pos-bnd} holds true during the iterations. If $\|\nabla_G E(U_n)\|_F = 0$ for some positive integer $n$, the conclusion is trivial. We assume otherwise.

Let $\sigma_n\le\frac{\nu_1}{\nu_1+2C}~(n=0,1,2,\cdots)$, then we have from \eqref{directionappro}, \eqref{EleD} and \eqref{pos-bnd} that
\begin{eqnarray}\label{negative_direction}
\langle\nabla_G E(U_n),D_n\rangle&\le&\sigma_n\|\nabla_G E(U_n)\|_F\|D_n\|_F-\nabla^2_G E(U_n)[D_n, D_n] \\
&\le&\frac{\sigma_n}{1-\sigma_n}C\|D_n\|_F^2-\nabla^2_G E(U_n)[D_n, D_n] \notag \\
&\le&\big(\frac{\sigma_n}{1-\sigma_n}C-\nu_1\big)\|D_n\|_F^2 \notag\\
&\le&-\frac{\nu_1}{2}\|D_n\|_F^2<0.\notag
\end{eqnarray}
Furthermore, we see that
\begin{eqnarray}\label{not_ortho}
\frac{-\langle\nabla_G E(U_n),D_n\rangle}{\|\nabla_G E(U_n)\|_F^2}&\ge&\frac{(1-\sigma_n)^2}{C^2}\frac{-\langle\nabla_G E(U_n),D_n\rangle}{\|D_n\|_F^2} \\
&\ge& \frac{(1-\sigma_n)^2\nu_1}{2C^2}\ge\frac{2\nu_1}{(\nu_1+2C)^2} \notag
\end{eqnarray}
where \eqref{EleD} is used again. Combining \eqref{negative_direction}, \eqref{not_ortho}, \eqref{DleE} and noting that $$\|\nabla_G E(U_n)\|_F\le\|\nabla E(U_n)\|_F\le C_0, n=0,1,\cdots$$ we complete the proof by using Theorem 2.7 of \cite{DZZ}.
%
%
\end{proof}

\begin{remark}
Let Assumptions \ref{assum_lip} and \ref{assum_hess} hold true, the retraction $\textup{ortho}(U, D, t)$ is chosen to satisfy Assumption \ref{diff}, $[U_0] \in B([U^\ast], \delta_2) \cap \mathcal{L}$ and $\sigma_n\le\frac{\nu_1}{\nu_1+2C}, n\in\mathbb{N}_0$. Then we see from Lemma 2.6 and the proof of Theorem 4.7 of \cite{DLZZ} that there exists an unique local minimizer in $B([U^\ast], \delta_1)$ which is $[U^\ast]$ itself owing to Assumption \ref{assum_hess} and
\begin{equation}\label{dist=0}
\lim_{n\to\infty} \textup{dist}_{F}([U_n], [U^\ast]) = 0.
\end{equation}
\end{remark}

\subsection{Convergence rate}
We are going to show the convergence rate of Algorithm \ref{Newton} assuming that the retraction used in Algorithm \ref{Newton} satisfies the following estimation.
\begin{assume}\label{lema-smdif}
Retraction $\textup{ortho}(U,D,t)$ satisfies that for any $D\in\mathcal{T}_{[U]}\mathcal{G}^N_{N_g}$, and $t\in[0,\frac{1}{\|D\|_F})$, there holds
\begin{equation}\label{smalldiff}
E(\textup{ortho}(U,D,t))-E(\exp_{[U]}(tD))= O(t^3\|D\|_F^3).
\end{equation}
\end{assume}

We note that all the well known retractions for orthogonality constrained optimization method satisfy Assumption \ref{lema-smdif}. We refer to Appendix \ref{app_retraction} for more detailed discussions. We then present that the backtracking is not required for any large enough $n$.

\begin{lemma}\label{noback}
Let Assumptions \ref{assum_lip} and \ref{assum_hess} hold true. Suppose retraction $\textup{ortho}(U, D, t)$ satisfies Assumption \ref{lema-smdif}, and the sequence $\{U_n\}_{n\in \mathbb{N}_0}, \{t_n\}_{n\in \mathbb{N}_0}$  are generated by Algorithm \ref{Newton}. If $[U_0] \in B([U^\ast], \delta_2) \cap \mathcal{L}$, then there exists a sequence $\{\sigma_n\}_{n\in \mathbb{N}_0}\subset(0,1)$ such that $\lim\limits_{n\to\infty} m_n\to 0$ in Algorithm \ref{Newton}.
\end{lemma}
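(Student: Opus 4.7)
The plan is to choose $\sigma_n$ below a fixed small threshold, e.g.\ $\sigma_n\le \frac{\nu_1}{\nu_1+2C}$ exactly as in Theorem \ref{thm-conv}, so that Lemma \ref{lema1}, Lemma \ref{stepsizebound}, Theorem \ref{thm-conv} and the descent estimate \eqref{negative_direction} are all in force. From these I would extract three ingredients: (a) $\|\nabla_G E(U_n)\|_F\to 0$ and hence, by \eqref{EequivD}, $\|D_n\|_F\to 0$; (b) the trial step size $t_n^{\textup{init}}$ is pinned in a compact subinterval of $(0,\infty)$, say $[\tfrac12,\tfrac32]$, using the quantitative bound $|t_n^{\textup{init}}-1|\le \frac{C\sigma_n}{\nu_1(1-\sigma_n)}$ from Lemma \ref{stepsizebound}; and (c) the curvature-type lower bound $-\langle\nabla_G E(U_n),D_n\rangle\ge \frac{\nu_1}{2}\|D_n\|_F^2$ already derived in \eqref{negative_direction}.

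Next I would Taylor-expand along the geodesic using \eqref{taylor1} of Proposition \ref{Taylor}. Because $t_n^{\textup{init}}\|D_n\|_F\to 0$, the hypothesis $t\in(0,1/\|D\|_F)$ is met for $n$ large, and the third-order remainder is $O((t_n^{\textup{init}})^3\|D_n\|_F^3)$ by the uniform bound \eqref{3upb} on $\nabla_G^3 E$. The definition \eqref{init_step} of $t_n^{\textup{init}}$ forces the quadratic term to cancel half of the linear one, so the expansion collapses to
\begin{equation*}
E(\exp_{[U_n]}(t_n^{\textup{init}}D_n)) \;=\; E(U_n) + \tfrac{t_n^{\textup{init}}}{2}\langle\nabla_G E(U_n),D_n\rangle + O\!\left(\|D_n\|_F^3\right).
\end{equation*}
Assumption \ref{lema-smdif} then transfers this identity to the actual retraction, introducing another $O(\|D_n\|_F^3)$ error.

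With these expansions in hand, the Armijo inequality \eqref{armijo} at step size $t_n^{\textup{init}}$ reduces to
\begin{equation*}
\bigl(\tfrac12-\eta\bigr)\,t_n^{\textup{init}}\bigl(-\langle\nabla_G E(U_n),D_n\rangle\bigr) \;\ge\; O\!\left(\|D_n\|_F^3\right).
\end{equation*}
Since $\eta<\tfrac14$ and $t_n^{\textup{init}}\ge \tfrac12$, the left-hand side is bounded below by a positive constant times $\|D_n\|_F^2$ thanks to ingredient (c). Dividing through by $\|D_n\|_F^2$, the inequality becomes a fixed positive constant on the left against $O(\|D_n\|_F)$ on the right, and by (a) the right-hand side tends to zero. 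Hence, for all sufficiently large $n$, the trial step size already satisfies \eqref{armijo}, which is to say $m_n=0$.

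The only real obstacle is bookkeeping: one must make sure the two separate cubic error terms — the one from the geodesic Taylor remainder and the one from the retraction-vs.-geodesic discrepancy in Assumption \ref{lema-smdif} — can both be absorbed into a single $O(\|D_n\|_F^3)$ quantity that is uniformly controlled by constants depending only on $C$, $\nu_1$, and the compact range of $t_n^{\textup{init}}$. Once that is done, no further analytic work is needed; the conclusion is immediate from $\|D_n\|_F\to 0$.
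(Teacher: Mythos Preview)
Your argument is correct and follows the same overall architecture as the paper's proof: expand $E(\textup{ortho}(U_n,D_n,t_n^{\textup{init}}))$ via the geodesic Taylor formula \eqref{taylor1} plus the retraction error from Assumption \ref{lema-smdif}, then absorb the $O(\|D_n\|_F^3)$ remainder using the curvature bound \eqref{negative_direction} and $\|D_n\|_F\to 0$. The one place you deviate is in handling the quadratic part of the model. You exploit the defining relation of $t_n^{\textup{init}}$ directly, obtaining the exact identity
\[
t_n^{\textup{init}}\langle\nabla_G E(U_n),D_n\rangle+\tfrac{(t_n^{\textup{init}})^2}{2}\nabla_G^2 E(U_n)[D_n,D_n]=\tfrac{t_n^{\textup{init}}}{2}\langle\nabla_G E(U_n),D_n\rangle,
\]
whereas the paper instead bounds the quadratic model at $t_n^{\textup{init}}$ above by its value at $t=1$ (using that $t_n^{\textup{init}}$ is the minimizer), then splits off $\tfrac12|\langle\nabla_G E(U_n),D_n\rangle+\nabla_G^2 E(U_n)[D_n,D_n]|$ and controls it with the inexactness bound \eqref{resi_bound1}, which forces the slightly tighter threshold $\sigma_n\le \tfrac{\nu_1}{\nu_1+4C}$. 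Your route is a bit more direct and does not need that extra tightening; otherwise the two proofs coincide.
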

\begin{proof}
We see from \eqref{taylor1} and \eqref{smalldiff} that
\begin{eqnarray*}
&&E(\textup{ortho}(U_n,D_n,t_n^{\textup{init}}))-E(U_n) \\
&=&E(\exp_{[U_n]}(t_n^{\textup{init}}D_n))-E(U_n) +E(\textup{ortho}(U_n,D_n,t_n^{\textup{init}}))-E(\exp_{[U_n]}(t_n^{\textup{init}}D_n)) \\
&=&t_n^{\textup{init}}\langle\nabla_GE(U_n),D_n\rangle+\frac{{t_n^{\textup{init}}}^2}{2}\nabla^2_GE(U_n)[D_n, D_n]+O(\|D_n\|_F^3).
\end{eqnarray*}
A simple calculations gives that $t_n^{\textup{init}}$ minimizes
$$t\langle\nabla_GE(U_n),D_n\rangle+\frac{t}{2}\nabla^2_GE(U_n)[D_n, D_n]$$
with respect to $t\in\mathbb{R}.$ Hence,
\begin{eqnarray}\label{differ}
&&E(\textup{ortho}(U_n,D_n,t_n^{\textup{init}}))-E(U_n) \\
&\le&\langle\nabla_GE(U_n),D_n\rangle+\frac{1}{2}\nabla^2_GE(U_n)[D_n, D_n]+O(\|D_n\|_F^3) \notag\\
&\le&\frac{1}{2}\langle\nabla_GE(U_n),D_n\rangle + \frac{1}{2}|\langle\nabla_GE(U_n),D_n\rangle+\nabla^2_GE(U_n)[D_n, D_n]| \notag\\
&&+ O(\|D_n\|_F^3) \notag\\
&\le&\frac{1}{2}\langle\nabla_GE(U_n),D_n\rangle + \frac{1}{2}\sigma_n\|\nabla_G E(U_n)\|_F\|D_n\|_F + O(\|D_n\|_F^3) \notag
\end{eqnarray}
where \eqref{resi_bound1} is used in the last inequality. Let $\sigma_n\le\frac{\nu_1}{\nu_1+2C}~(n=0,1,2,\cdots)$,
we obtain from \eqref{DleE} and \eqref{negative_direction} that
\begin{equation}\label{inversebound}
\|\nabla_G E(U_n)\|_F\|D_n\|_F\le-\frac{2C}{(1-\sigma_n)\nu_1}\langle\nabla_GE(U_n),D_n\rangle.
\end{equation}
In addition, we have $-\langle\nabla_GE(U_n),D_n\rangle\cong\|\nabla_G E(U_n)\|_F\|D_n\|_F$ since $$-\langle\nabla_GE(U_n),D_n\rangle\le\|\nabla_G E(U_n)\|_F\|D_n\|_F.$$
We then immediately see from Lemma \ref{stepsizebound} that $-\langle\nabla_GE(U_n),D_n\rangle\cong\|D\|_F^2$. Here, $A\cong B$ means that there exist some constants $0<\underline{c}\le\bar{c}<\infty$ such that $\underline{c}A\le B\le\bar{c}A$.

Now let $\sigma_n \le \frac{\nu_1}{\nu_1+4C}$, we obtain
\begin{eqnarray*}
0\le\frac{1}{2}\sigma_n\|\nabla_G E(U_n)\|_F\|D_n\|_F &\le& -\frac{1}{4}\langle\nabla_GE(U_n),D_n\rangle.
\end{eqnarray*}
As a result, \eqref{EleD} indicates that
\begin{equation}\label{differ2}
E(\textup{ortho}(U_n,D_n,t_n^{\textup{init}}))-E(U_n) \le\frac{1}{4}\langle\nabla_GE(U_n),D_n\rangle\big(1 + O(\|D_n\|_F)\big).
\end{equation}
Note that $\displaystyle\lim_{n\to\infty}\|D_n\|_F = 0$, \eqref{differ2} indicates that $t_n=t_n^{\textup{init}}$ satisfies the Armijo condition for sufficiently large $n$ with $\eta \in(0,\frac{1}{4})$, which completes the proof.
\end{proof}

Let $[U^\ast]$ be the unique local minimizer in $B([U^\ast], \delta_1)$, we have the following theorem.

\begin{theorem}\label{conv-rate}
Let Assumptions \ref{assum_lip} and \ref{assum_hess} hold true. Suppose retraction $\textup{ortho}(U, D, t)$ satisfies Assumption \ref{lema-smdif}, and the sequence $\{U_n\}_{n\in \mathbb{N}_0}$  is generated by Algorithm \ref{Newton}. If $[U_0] \in B([U^\ast], \delta_2) \cap \mathcal{L}$, then there exists a sequence $\{\sigma_n\}_{n\in \mathbb{N}_0}\subset(0,1)$ such that $\{U_n\}_{n\in \mathbb{N}_0}$ converges to $[U^\ast]$ at least quadratically for sufficiently large $n$, namely,
\begin{equation}
\textup{dist}_{geo}([U_{n+1}],[U^\ast])\leq\zeta {\textup{dist}_{geo}([U_n],[U^\ast])}^2
\end{equation}
for some constant $\zeta>0$.
\end{theorem}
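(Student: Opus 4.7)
The plan is to lift the classical inexact Newton convergence argument to the Grassmann manifold, using the Taylor identities in Proposition \ref{Taylor} together with Lemma \ref{distbound} to pass between gradient norms and geodesic distance. By Lemma \ref{noback}, for all sufficiently large $n$ we may take $t_n = t_n^{\textup{init}}$, so the Armijo backtracking loop can be ignored. I would then strengthen the forcing sequence by choosing $\sigma_n \le \min\{\nu_1/(\nu_1+4C),\, c\,\|\nabla_G E(U_n)\|_F\}$ for a fixed constant $c>0$; this is compatible with every prior restriction on $\sigma_n$ used in Section \ref{sec-alg}, and Lemma \ref{stepsizebound} then supplies both $|t_n-1| = O(\sigma_n) = O(\|\nabla_G E(U_n)\|_F)$ and $\|D_n\|_F \cong \|\nabla_G E(U_n)\|_F$.

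Next, I would apply the Taylor identity \eqref{taylor2} along the geodesic $\exp_{[U_n]}(t D_n)$ at $t = t_n$ to obtain $\tau_{t_n D_n}^{-1}\nabla_G E(\exp_{[U_n]}(t_n D_n)) = \nabla_G E(U_n) + t_n\,\nabla_G^2 E(U_n)[D_n] + R_n$, with a cubic remainder $\|R_n\|_F \le \tfrac{C}{2} t_n^2 \|D_n\|_F^2$ coming from \eqref{3upb}. Rewriting the linear part as $(1-t_n)\nabla_G E(U_n) + t_n\bigl(\nabla_G E(U_n) + \nabla_G^2 E(U_n)[D_n]\bigr)$, the first summand is $O(\|\nabla_G E(U_n)\|_F^2)$ by the choice of $\sigma_n$, while the second is bounded by $t_n\sigma_n\|\nabla_G E(U_n)\|_F = O(\|\nabla_G E(U_n)\|_F^2)$ via the inexact Newton condition \eqref{directionappro}. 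Since parallel transport is an isometry, we conclude $\|\nabla_G E(\exp_{[U_n]}(t_n D_n))\|_F = O(\|\nabla_G E(U_n)\|_F^2)$.

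To transfer this estimate from the geodesic endpoint to the actual iterate $U_{n+1} = \textup{ortho}(U_n, D_n, t_n)$, I would observe that the exponential map is itself a retraction obeying Assumption \ref{diff}, so that the bound \eqref{differfromuptd} yields $\|U_{n+1} - \exp_{[U_n]}(t_n D_n)\|_F \le 2 C_1 t_n^2 \|D_n\|_F^2 = O(\|\nabla_G E(U_n)\|_F^2)$ after selecting a representative of the equivalence class adjacent to $U_{n+1}$. Combining this with the Lipschitz estimate \eqref{lip} gives $\|\nabla_G E(U_{n+1})\|_F = O(\|\nabla_G E(U_n)\|_F^2)$. Finally, Lemma \ref{distbound} produces $\textup{dist}_{geo}([U_{n+1}], [U^\ast]) \le \|\nabla_G E(U_{n+1})\|_F/\nu_1$, while applying \eqref{lip} at $[U^\ast]$ with $\nabla_G E(U^\ast) = 0$ (and exploiting the equivariance $\nabla_G E(UP) = \nabla_G E(U)P$) yields $\|\nabla_G E(U_n)\|_F \le L_1 \textup{dist}_F([U_n],[U^\ast]) \le L_1\,\textup{dist}_{geo}([U_n],[U^\ast])$. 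Chaining these inequalities gives the quadratic bound with $\zeta$ a multiple of $L_1^2/\nu_1$.

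The principal obstacle will be the reconciliation of the retraction with the geodesic in the third step: Assumption \ref{lema-smdif} only controls energy differences, so a genuine pointwise comparison is needed, and the quotient nature of $\mathcal{G}^N_{N_g}$ forces one to track representatives carefully before invoking ambient Frobenius-norm Lipschitz estimates. Verifying that the geodesic itself satisfies the bounds of Assumption \ref{diff} (so that \eqref{differfromuptd} is applicable), and that the resulting representative-dependent $U_{n+1} - V$ is indeed $O(\|\nabla_G E(U_n)\|_F^2)$ in the ambient Frobenius norm, are the delicate points of the proof.
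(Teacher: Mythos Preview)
Your proposal is correct and follows essentially the same route as the paper's own proof: both invoke Lemma \ref{noback} to discard backtracking, tighten $\sigma_n$ to be $O(\|\nabla_G E(U_n)\|_F)$, expand $\nabla_G E$ along the geodesic via \eqref{taylor2}, transfer to the retracted iterate using \eqref{differfromuptd} together with the Lipschitz bound \eqref{lip}, and finally convert gradient norms to geodesic distances through Lemma \ref{distbound}. The one noteworthy difference is how each argument closes the loop from $\|D_n\|_F$ (or $\|\nabla_G E(U_n)\|_F$) back to $d_n$: the paper takes a detour through an energy inequality, combining the Armijo decrease with the two-sided bound in Lemma \ref{distbound} to obtain $\tfrac{\eta\nu_1}{2}\|D_n\|_F^2 \le \tfrac{C}{2}d_n^2$, whereas you bypass this by applying \eqref{lip} directly at $[U^\ast]$ to get $\|\nabla_G E(U_n)\|_F \le L_1\,\textup{dist}_F([U_n],[U^\ast])$. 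Your shortcut is cleaner and yields a slightly tidier constant; the paper's energy route, on the other hand, makes the role of the Armijo condition more visible. The delicate points you flag (that the exponential map itself obeys Assumption \ref{diff}, and that one must fix the representative $\Gamma(t_n)$ of \eqref{geodesic3} before invoking ambient Frobenius estimates) are exactly the ones the paper handles tacitly, the first via the GA framework in Appendix \ref{app_retraction}.
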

\begin{proof}
For simplicity, we denote $d_n=\textup{dist}_{geo}([U_n],[U^\ast])$. Lemma \ref{noback} implies that $U_{n+1}=\textup{ortho}(U_n, D_n, 1)$ for sufficiently large $n$. Then we have from Lemma \ref{distbound} and \eqref{absbound} that
\begin{eqnarray*}
\frac{\nu_1}{2}{d_{n+1}}^2&\le& E(U_{n+1})-E(U^\ast) \\
&=&E(U_{n+1})-E(U_n)+E(U_n)-E(U^\ast) \\
&\leq&\eta\langle\nabla_GE(U_n),D_n\rangle+\frac{C}{2}{d_n}^2  \\
&\leq&-\eta\nabla^2_GE(U_n)[D_n, D_n]+\frac{C\sigma_n}{1-\sigma_n}\|D_n\|_F^2+\frac{C}{2}{d_n}^2  \\
&\leq&(\frac{C\sigma_n}{1-\sigma_n}-\eta\nu_1)\|D_n\|_F^2+\frac{C}{2}{d_n}^2 \\
&\leq&-\frac{\eta\nu_1}{2}\|D_n\|_F^2+\frac{C}{2}{d_n}^2,
\end{eqnarray*}
as long as $\sigma_n\le\frac{\eta\nu_1}{\eta\nu_1+2C}$.

Furthermore, \eqref{taylor2} indicates that there exists a $\xi\in(0,t_n)$, such that
\begin{eqnarray*}
&&\|\nabla_G E(\exp_{[U_n]}(t_nD_n)\|_F^2 \\
&=&\langle\tau_{t_nD_n}^{-1}\big(\nabla_G E(\exp_{[U_n]}(t_nD_n))\big),\tau_{t_nD_n}^{-1}\big(\nabla_G E(\exp_{[U_n]}(t_nD_n))\big)\rangle \\
&=&\langle\nabla_G E(U_n)+t_n\nabla_G^2E(U_n)[D_n],\tau_{t_nD_n}^{-1}\big(\nabla_G E(\exp_{[U_n]}(t_nD_n))\big)\rangle \\
&&+\frac{t_n^2}{2}\nabla_G^3 E(\exp_{[U_n]}(\xi D_n))[\tau_{\xi D_n} (D_n), \tau_{\xi D_n} (D_n), \tau_{\xi D_n} \big(\tau_{t_nD_n}^{-1}(\nabla_G E(\exp_{[U_n]}(D_n)))\big)].
\end{eqnarray*}
Applying \eqref{directionappro}, Lemma \ref{stepsizebound} and \eqref{3upb}, we have
\begin{eqnarray*}
&&\|\nabla_G E(\exp_{[U_n]}(t_nD_n)\|_F^2 \\
&=&\langle\nabla_G E(U_n)+\nabla_G^2E(U_n)[D_n],\tau_{t_nD_n}^{-1}\big(\nabla_G E(\exp_{[U_n]}(t_nD_n))\big)\rangle \\
&&+\langle(t_n-1)\nabla_G^2E(U_n)[D_n],\tau_{t_nD_n}^{-1}\big(\nabla_G E(\exp_{[U_n]}(t_nD_n))\big)\rangle \\
&&+\frac{t_n^2}{2}\nabla_G^3 E(\exp_{[U_n]}(\xi D_n))[\tau_{\xi D_n} (D_n), \tau_{\xi D_N} (D_n), \tau_{\xi D_n} \big(\tau_{t_nD_n}^{-1}(\nabla_G E(\exp_{[U_n]}(t_nD_n)))\big)] \\
&\leq&\big(\sigma_n\|\nabla_GE(U_n)\|_F+\frac{C^2\sigma_n}{\nu_1(1-\sigma_n)}\|D_n\|_F+\frac{Ct_n^2}{2}\|D_n\|_F^2\big)\|\nabla_G E(\exp_{[U_n]}(t_nD_n)\|_F.
\end{eqnarray*}
Namely,
\begin{equation*}
\|\nabla_G E(\exp_{[U_n]}(D_n))\|_F\leq\sigma_n\|\nabla_GE(U_n)\|_F + \frac{C^2\sigma_n}{\nu_1(1-\sigma_n)}\|D_n\|_F + \frac{Ct_n^2}{2}\|D_n\|_F^2.
\end{equation*}
Now choose $\sigma_n \le \min\{\eta\nu_1/(\eta\nu_1+2C), \|\nabla_G E(U_n)\|_F\}$, we have $t_n\le\frac{3}{2}$ and then
\begin{equation}\label{GLD2}
\|\nabla_G E(\exp_{[U_n]}(D_n))\|_F\leq\frac{2(1+\frac{C}{\nu_1})(2C+\eta\nu_1)^2+9C}{8}\|D_n\|_F^2,
\end{equation}
where \eqref{EleD} is used.
Combining \eqref{differfromuptd} and \eqref{GLD2}, we obtain
\begin{eqnarray*}
&&\|\nabla_G E(U_{n+1})\|_F \\
&\leq&\|\nabla_G E(U_{n+1})-\nabla_G E(\exp_{[U_n]}(t_nD_n)\|_F+\|\nabla_G E(\exp_{[U_n]}(t_nD_n)\|_F \\
&\leq&L_1\|\textup{ortho}(U_n,D_n,t_n)-\exp_{[U_n]}(t_nD_n)\|_F+\frac{2(1+\frac{C}{\nu_1})(2C+\eta\nu_1)^2+9C}{8}\|D_n\|_F^2 \\
&\leq&\frac{24L_1C_2+9C+2(1+\frac{C}{\nu_1})(2C+\eta\nu_1)^2}{4}\|D_n\|_F^2.
\end{eqnarray*}
Consequently,
\begin{eqnarray*}
\frac{C}{2}{d_n}^2&\geq&\frac{\nu_1}{2}{d_{n+1}}^2+\frac{\eta\nu_1}{2}\|D_n\|_F^2 \\
&\geq&\frac{2\eta\nu_1}{24L_1C_2+9C+2(1+\frac{C}{\nu_1})(2C+\eta\nu_1)^2}\|\nabla_G E(U_{n+1})\|_F \\
&\geq&\frac{2\eta\nu_1^2}{24L_1C_2+9C+2(1+\frac{C}{\nu_1})(2C+\eta\nu_1)^2}d_{n+1},
\end{eqnarray*}
where Lemma \ref{distbound} is used in the last line.
Finally, we get that
\begin{equation*}
d_{n+1}\leq\zeta {d_n}^2,
\end{equation*}
and $\zeta$ can be chosen as $\frac{C(24L_1C_2+9C+2(1+\frac{C}{\nu_1})(2C+\eta\nu_1)^2)}{4\eta\nu_1^2}$.
\end{proof}

\begin{remark}
If energy functional $E(U)$ is of second order differentiable only, then our theoretical results still hold true under the assumption that the Grassmann Hessian of $E(U)$ is locally Lipschitz continuous in the following sense: there exists a constant $L>0$ such that for all $[U], [V]\in B([U^\ast],\delta_1)$, 
\begin{equation}\label{Hess-Lip1}
\|\tau_{D}^{-1}\big(\nabla_G^2 E\big(V\big)[\tau_{D}(\cdot)]\big)-\nabla_G^2 E\big(U\big)[\cdot]\|\le L\textup{dist}_{geo}([U],[V]),
\end{equation}
where $D\in\mathcal{T}_{[U]}\mathcal{G}_{N_g}^N$ such $[\exp_{[U]}(D)]=[V]$ (c.f. \eqref{geodesic1}).
Equivalently, Lipschitz condition \eqref{Hess-Lip1} can be rewritten as
\begin{equation}\label{Hess-Lip}
\|\tau_{D}^{-1}\big(\nabla_G^2 E\big(\exp_{[U]}(D)\big)[\tau_{D}(\cdot)]\big)-\nabla_G^2 E\big(U\big)[\cdot]\|\le L\|D\|_F.
\end{equation}
In fact, we get from \eqref{Hess-Lip} that for $D,\tilde{D},\tilde{\tilde{D}}\in\mathcal{T}_{[U]}\mathcal{G}_{N_g}^N$
\begin{eqnarray*}
&&|\nabla_G^2 E(\exp_{[U]}(D))[\tau_{D}(\tilde{D}),\tau_{D}(\tilde{\tilde{D}})]-\nabla_G^2 E(U)[\tilde{D},\tilde{\tilde{D}}]| \\
&=&|\langle\tau_{D}^{-1}\big(\nabla_G^2 E(\exp_{[U]}(D))[\tau_D(\tilde{D})]\big) - \nabla_G^2 E(U)[\tilde{D}], \tilde{\tilde{D}}\rangle| \\
&\le&\| \tau_{D}^{-1}\big(\nabla_G^2 E(\exp_{[U]}(D))[\tau_D(\tilde{D})]\big) - \nabla_G^2 E(U)[\tilde{D}]\|_F \|\tilde{\tilde{D}}\|_F \\
&\le&\|{\tau_D}^{-1}\big(\nabla_G^2 E\big(\exp_{[U]}(D)\big)[\tau_D(\cdot)]\big)-\nabla_G^2 E(U)[\cdot]\|\|\tilde{D}\|_F\|\tilde{\tilde{D}}\|_F \\
&\le&L\|D\|_F\|\tilde{D}\|_F\|\tilde{\tilde{D}}\|_F.
\end{eqnarray*}
Thus, we obtain from \eqref{taylor3} and \eqref{taylor4} that if $t\ge0$ is bounded, then
\begin{eqnarray}\label{approx1}
&&|E(\exp_{[U]}(tD))-E(U) - t\langle\nabla_GE(U),D\rangle -  \frac{t^2}{2}\nabla_G E(U)[D,D]| \\
&\le&t\int_0^t (1-\frac{s}{t})|\big(\nabla_G^2 E(\exp_{[U]}(sD))[\tau_{sD}(D),\tau_{sD}(D)]-\nabla_G E(U)[D,D]\big)|ds \notag\\
&\le&Lt\int_0^t (1-\frac{s}{t})sds\|D\|_F^3 = \frac{Lt^3}{6}\|D\|_F^3 = O(\|D\|_F^3).\notag
\end{eqnarray}
Similarly, we have
\small\begin{eqnarray}\label{approx2}
&&|\|\nabla_G E(\exp_{[U]}(tD)\|_F^2 - \langle\nabla_G E(U)+t\nabla_G^2E(U)[D],{\tau_{tD}}^{-1}\big(\nabla_G E(\exp_{[U]}(tD))\big)\rangle|  \\
&=&|\int_0^t \nabla^2_GE(\exp_{[U]}(sD))[\tau_{sD}(D), \tau_{sD}\big(\tau_{tD}^{-1}(\nabla_G E(\exp_{[U_n]}(t_nD_n)))\big)]ds \notag\\
&&-\int_0^t \nabla_G^2 E(U)[D,\tau_{tD}^{-1}\big(\nabla_G E(\exp_{[U]}(tD))\big)]ds| \notag\\
&\le& \int_0^t \langle\tau_{sD}^{-1}\big(\nabla_G^2 E(\exp_{[U]}(sD))[\tau_{sD}(D)]\big)-\nabla_G^2 E(U)[D], \tau_{tD}^{-1}\big(\nabla_G E(\exp_{[U]}(tD))\big)\rangle ds \notag\\
&\le&\frac{Lt^2}{2}\|\nabla_G E(\exp_{[U]}(tD)\|_F\|D\|_F^2.\notag
\end{eqnarray}
Note that  \eqref{approx1} and \eqref{approx2} valid Lemma \ref{noback} and Theorem \ref{conv-rate}, respectively.
\end{remark}

\section{Implementation issues} \label{sec-spbs}
In this section, we address how to choose suitable search directions and investigate an adaptive step size strategy \cite{DZZ} to make our algorithm more practical.
\subsection{Search direction solver}
It is a key issue to determine the search direction $D_n$ efficiently in Algorithm \ref{Newton}. There are some existing works finding the direction by approximately solving
\begin{equation}\label{newton-equation-n}
\nabla^2_G E(U_n)[D]+\nabla_GE(U_n) = 0
\end{equation}
as some linear systems \cite{HMWY, ZBJ}. Instead of solving any linear systems, we present a new perspective to obtain desired search direction $D_n\in\mathcal{T}_{[U_n]}\mathcal{G}_{N_g}^N$ which satisfies \eqref{directionappro}.

We observe that \eqref{newton-equation-n} is the first order necessary condition of the following minimization problem:
\begin{equation}\label{miniprob}
\min_{D\in \mathcal{D}_{[U_n]}} \ \ \ \langle\nabla_G E(U_n), D\rangle+ \frac{1}{2}\nabla^2_G E(U_n)[D, D].
\end{equation}
The solution of \eqref{miniprob} is also the solution of \eqref{newton-equation-n}. Therefore, we turn to solve \eqref{miniprob} instead of solving \eqref{newton-equation-n} directly.

In addition, we see that solving \eqref{miniprob} is equivalent to solving an orthogonality constrained problem which is a consequence of the following lemma.
\begin{lemma}\label{fullmap}
For any $D\in \mathcal{D}_{[U_n]}$, there exist $U\in \mathcal{M}^N_{N_g}$ such that
\begin{equation}\label{equiv}
(I_N - U_nU_n^T)U=D.
\end{equation}
\end{lemma}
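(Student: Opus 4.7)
The plan is to give an explicit formula for $U$ by way of the ansatz
\[
U = U_n C + D, \qquad C \in \mathbb{R}^{N \times N}.
\]
First, I would note that since $D \in \mathcal{D}_{[U_n]} \subset \mathcal{T}_{[U_n]}\mathcal{G}^N_{N_g}$, the tangency condition \eqref{dis-tangent} forces $U_n^T D = 0$. A direct calculation then gives $(I_N - U_n U_n^T)(U_n C + D) = U_n C - U_n C + D - U_n(U_n^T D) = D$, so \eqref{equiv} is satisfied for any $C$. This ansatz is also inevitable: any $U$ solving \eqref{equiv} differs from $D$ by an element of $\ker(I_N - U_n U_n^T) = \textup{range}(U_n)$.

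Next, I would impose the Stiefel condition $U^T U = I_N$. Using $U_n^T D = 0$ and $U_n^T U_n = I_N$, this reduces to the matrix equation $C^T C = I_N - D^T D$. To exhibit a solution, I would invoke the SVD $D = A S B^T$ provided by the remark preceding the lemma and set $C := B(I_N - S^2)^{1/2} B^T$. A short computation then yields $C^T C = B(I_N - S^2)B^T = I_N - D^T D$ by orthogonality of $B$, so $U := U_n C + D \in \mathcal{M}^N_{N_g}$ will be the required matrix.

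The main obstacle I foresee is verifying that $(I_N - S^2)^{1/2}$ makes sense, i.e., that the singular values of $D$ lie in $[0, 1]$ so that $I_N - D^T D \succeq 0$. This is the only place where the restriction to $\mathcal{D}_{[U_n]}$ (rather than to an arbitrary element of $\mathcal{T}_{[U_n]}\mathcal{G}^N_{N_g}$) truly enters the argument: the singular values of elements of $\mathcal{D}_{[U_n]}$ correspond to the principal angles realizing the geodesic distance $\textup{dist}_{geo}$ from the preceding remark, and one must use this correspondence to bound $\|D\|_2$ appropriately. Once this is in place, the tangency and norm identities for $U$ are immediate from the algebraic computations above.
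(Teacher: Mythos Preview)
Your construction is correct and in fact produces exactly the same matrix $U$ as the paper. The paper defines $\tilde{D}=A\arcsin(S)B^T$, checks $U_n^T\tilde{D}=0$, and then sets $U=\exp_{[U_n]}(\tilde{D})=U_nB\cos(\arcsin S)B^T+D$; since $\cos(\arcsin S)=(I_N-S^2)^{1/2}$, this is precisely your $U=U_nC+D$ with $C=B(I_N-S^2)^{1/2}B^T$. The difference is only in framing: you arrive at $C$ by solving the algebraic condition $C^TC=I_N-D^TD$ directly, while the paper takes a geometric detour through the exponential map and must separately verify the auxiliary tangency $U_n^T\tilde{D}=0$ via an entrywise argument on the SVD. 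Your route is shorter and avoids the Riemannian machinery entirely; the paper's route has the (minor) bonus of identifying $U$ explicitly as a point on a geodesic from $[U_n]$.

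Both arguments rest on the same requirement that the singular values satisfy $s_j\le 1$, so that $(I_N-S^2)^{1/2}$ (equivalently $\arcsin S$) is well defined. You correctly isolate this as the one place where the hypothesis $D\in\mathcal{D}_{[U_n]}$ is genuinely needed; the paper's proof simply writes $\arcsin S$ without further comment. One caution: the paper's literal definition of $\mathcal{D}_{[U_n]}$ constrains only $\|D\|_F$, not the individual singular values, so your appeal to ``the singular values correspond to the principal angles'' relies on the intended (minor-arc) interpretation made explicit in the remark preceding the lemma rather than on the set-builder definition alone. This is a definitional subtlety shared with the paper, not a gap in your argument.
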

\begin{proof}
For any $D\in \mathcal{D}_{[U_n]}$, suppose $D=ASB^T$ to be the SVD of $D$ with $A\in\mathcal{M}_{N_g}^N$. Let $$\tilde{D}=A \arcsin{S}B^T.$$
We claim that $\tilde{D}\in\mathcal{D}_{[U_n]}$.

In fact, we see from $U_n^TD=\bf{0}$ that $$(U_n^TA)S=\bf{0}.$$ Suppose $(U_n^TA)=(z_{ij})_{i,j=1}^N$ and $S=\textup{Diag}\big((s_j)_{j=1}^N\big)$, then $z_{ij}s_j=0$, which implies $z_{ij}=0$ or $s_j=0$.

Hence $U_n^T\tilde{D}=(U_n^TA)\arcsin{S}B^T$. Consider $U_n^T\tilde{D}B=(U_n^TA)\arcsin{S}$, whose $ij$-th element is $z_{ij}\arcsin{s_j}$. Note that $$z_{ij}\arcsin{s_j}=0$$ for either $z_{ij}=0$ or $s_j=0$. As a result, $$U_n^T\tilde{D}=U_n^T\tilde{D}BB^T=\bf{0}.$$
Now, let $U=\exp_{[U_n]}(\tilde{D})$, where $\exp_{[U_n]}(D)$ means the geodesic on Grassmann manifold $\mathcal{G}_{N_g}^N$ with starting point $U_n$ and along direction $D$. Then we see from Theorem 2.3 of \cite{EAS} that
\begin{eqnarray*}
U&=&\big(U_nB, A\big)\big(\cos(\arcsin{S}), S\big)^TB^T \\
&=&U_nB\cos(\arcsin{S})B^T+D,
\end{eqnarray*}
which leads to $(I_N - U_nU_n^T)U=D.$ In addition, we have $\|\tilde{D}\|_F = \textup{dist}_{geo}([U_n],[\exp_{[U_n]}(\tilde{D})])$ and complete the proof.
\end{proof}

Thanks to Lemma \ref{fullmap}, we turn to investigate
\begin{equation}\label{miniprob2}
\min_{U\in \mathcal{M}^N_{N_g}} \ \ \ \bar{E}_n(U),
\end{equation}
instead of solving \eqref{newton-equation-n}. Here, $$\bar{E}_n(U)=\langle\nabla_G E(U_n), (I_N - U_nU_n^T)U\rangle+ \frac{1}{2}\nabla^2_G E(U_n)[(I_N - U_nU_n^T)U, (I_N - U_nU_n^T)U].$$

We see from \cite{EAS} that the gradient of $\bar{E}_n(U)$ on $\mathcal{M}^N_{N_g}$ is
\begin{equation}\label{Ebar-gra}
\nabla_S \bar{E}_n(U) = \nabla \bar{E}_n(U)-U\nabla \bar{E}_n(U)^TU,
\end{equation}
where
$$\nabla \bar{E}_n(U) = \nabla_G E(U_n)+ \nabla_G^2 E(U_n)[(I_N-U_nU_n^T)U]$$
and the Hessian of $\bar{E}_n(U)$ on $\mathcal{M}^N_{N_g}$ is
\begin{eqnarray}\label{Ebar-hess}
\nabla^2_S \bar{E}_n(U)[\delta U_1, \delta U_2] &=& \nabla_G^2 E(U_n)[(I_N-U_nU_n^T)\delta U_1, \delta U_2] \notag\\
&&+\frac{1}{2} \textup{tr}\big((\nabla\bar{E}(U_n)^T\delta U_1U^T+U^T\delta U_1\nabla\bar{E}(U_n)^T)\delta U_2\big)  \notag \\
&&-\frac{1}{2} \textup{tr}\big((U^T\nabla\bar{E}(U_n)+\nabla\bar{E}(U_n)^TU)\delta U_1^T(I-UU^T)\delta U_2\big),
\end{eqnarray}
with $$\delta U_1, \delta U_2\in\mathcal{T}_U\mathcal{M}_{N_g}^N = \{W\in\mathbb{R}^{N_g\times N}|W^TU+U^TW = 0\}.$$

Suppose $\bar{U}$ is a solution of \eqref{miniprob2}, then $D_n=(I_N - U_nU_n^T)\bar{U}$ is a solution of \eqref{miniprob} which can be chosen as the Newton search direction. We see that \eqref{miniprob2} is an orthogonality constrained minimization problem and can be solved by an orthogonality constrained CG method, whose details are shown in Algorithm \ref{CG}.

\begin{algorithm}\label{CG}
\caption{Conjugate gradient method for solving \eqref{miniprob2}}
 Give $\gamma_1,\gamma_2, q\in(0,1)$, the initial data $U^{(0)}\in\mathcal{M}^{N}_{N_g}$, $\delta U^{(0)} = -\nabla_S\bar{E}_n(U^{(0)})$, set $k = 0$;

\While{not converge}{
\If{$\langle\delta U^{(k)},\nabla_S\bar{E}(U^{(k)})\rangle>0$}{
$$\delta U^{(k)} = -\delta U^{(k)};$$}
\If{$\frac{-\langle\delta U^{(k)},\nabla_S\bar{E}(U^{(k)})\rangle}{\|\nabla_S\bar{E}(U^{(k)})\|_F^2}<\gamma_1$}{
$$\delta U^{(k)} = -\nabla_S\bar{E}(U^{(k)});$$}

Calculate $$\alpha^{(k)}=\frac{-\langle\delta U^{(k)},\nabla_S\bar{E}(U^{(k)})\rangle}{\nabla_S^2 \bar{E}(U_n)[(I_N-U_nU_n^T)\delta U^{(k)}, \delta U^{(k)}]};$$

Update $$U^{(k+1)} = \textup{ortho}(U^{(k)}, \delta U^{(k)}, \alpha^{(k)});$$

\If{$\bar{E}(U^{(k+1)})-\bar{E}(U^{(k)})\ge\gamma_2\alpha^{(k)}\langle\delta U^{(k)},\nabla_S\bar{E}(U^{(k)})\rangle$}{

$$\alpha^{(k)} = q\alpha^{(k)};$$

Update $$U^{(k+1)} = \textup{ortho}(U^{(k)}, \delta U^{(k)}, \alpha^{(k)});$$
}

Calculate \begin{eqnarray*}
\beta^{(k)}&=&\frac{\|\nabla_S\bar{E}(U^{(k+1)})\|_F^2}{\|\nabla_S\bar{E}(U^{(k)})\|_F^2}; \\
\delta U^{(k+1)}&=&-\nabla_S\bar{E}(U^{(k+1)})+\beta^{(k)}(I_N-U^{(k+1)}{U^{(k+1)}}^T)\delta U^{(k)};
\end{eqnarray*}

Let $k=k+1$;
}

Return $\tilde{U} = U^{(k)}$ and $D = (I_N-U_nU_n^T)U^{(k)}.$
\end{algorithm}

The convergence of Algorithm \ref{CG} can be guaranteed under a mild assumption. If Assumption \ref{assum_lip} holds true, then
\begin{eqnarray*}
\|\nabla_S \bar{E}_n(U)\|_F &\le& 2\|\nabla\bar{E}_n(U)\|_F \\
&=&2\|\nabla_G E(U_n)+ \nabla_G^2 E(U_n)[(I_N-U_nU_n^T)U]\|_F \\
&\le& 2(C_0+C\|U\|_F) = 2(C_0+C\sqrt{N})
\end{eqnarray*}
is bounded and we are able to prove the convergence of Algorithm \ref{CG} by the similar strategy as the proof of Theorem 2.7 in \cite{DZZ}. Here, we state the convergence result without proof.
\begin{theorem}
Let Assumptions \ref{assum_lip} hold true. Assume that operator $\textup{ortho}(U, D, t)$ used in Algorithm \ref{CG} is one of the retractions. If $U_n \in \mathcal{M}_{N_g}^N$, then for the sequence $\{U^{(k)}\}_{k\in\mathbb{N}_0}$ generated by Algorithm \ref{CG}, there holds either $$\|\nabla_S \bar{E}_n(U^{(k)})\|_F = 0$$ for some positive integer $k$ or
$$\displaystyle\liminf_{k\to\infty} \|\nabla_S \bar{E}_n(U^{(k)})\|_F = 0.$$
\end{theorem}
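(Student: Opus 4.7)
The plan is to view Algorithm \ref{CG} as an instance of the general orthogonality‑constrained line search scheme analyzed in Theorem 2.7 of \cite{DZZ}, applied to the auxiliary quadratic objective $\bar{E}_n$ on the Stiefel manifold $\mathcal{M}^N_{N_g}$. Since $U_n$ is fixed throughout Algorithm \ref{CG}, the functional $\bar{E}_n(U)$ is a polynomial (degree two in $U$) whose coefficients depend only on $U_n$ and the operators $\nabla_G E(U_n)$, $\nabla_G^2 E(U_n)$. Thus $\bar{E}_n$ is $C^\infty$ on $\mathbb{R}^{N_g\times N}$, its Euclidean gradient and Hessian are bounded on the compact set $\mathcal{M}^N_{N_g}$, and consequently $\nabla_S \bar{E}_n$ is Lipschitz continuous on $\mathcal{M}^N_{N_g}$ (with a constant that plays the role of $L_1$ in the proof of Theorem \ref{thm-conv}). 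Compactness of the Stiefel manifold also gives that $\bar{E}_n$ is bounded below, which is the only remaining ingredient required for a Zoutendijk‑type argument.

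Next I would verify that the search directions and step sizes produced by Algorithm \ref{CG} fulfill the two structural hypotheses used in Theorem 2.7 of \cite{DZZ}. The two \textbf{if}‑statements in the inner loop are precisely designed for this purpose: the first guarantees $\langle \delta U^{(k)}, \nabla_S \bar{E}_n(U^{(k)})\rangle \le 0$ (descent), and the second guarantees the uniform angle/sufficient‑descent condition
\begin{equation*}
-\langle \delta U^{(k)},\nabla_S \bar{E}_n(U^{(k)})\rangle \ge \gamma_1 \|\nabla_S \bar{E}_n(U^{(k)})\|_F^2,
\end{equation*}
by restarting with the negative gradient whenever the ratio falls below $\gamma_1$. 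Combined with the uniform upper bound $\|\nabla_S \bar{E}_n(U)\|_F \le 2(C_0 + C\sqrt{N})$ already noted in the excerpt, this is exactly the analogue of inequalities \eqref{negative_direction}–\eqref{not_ortho} that drive the proof of Theorem \ref{thm-conv}.

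For the step sizes I would argue that the Hessian‑based initial guess $\alpha^{(k)}$ together with the backtracking rule $\alpha^{(k)} \leftarrow q\alpha^{(k)}$ eventually produces a step satisfying the Armijo condition with slope parameter $\gamma_2$. Here the retraction assumption (Assumption \ref{diff}) enters: a Taylor expansion of $\bar{E}_n\!\circ\!\textup{ortho}(U^{(k)},\delta U^{(k)},\cdot)$ around $t=0$ combined with \eqref{prop4.1}–\eqref{prop4.2} shows that for sufficiently small $t$ the Armijo inequality holds, so the inner backtracking loop terminates in finitely many steps and produces a $t_k$ bounded below by a positive multiple of $\|\delta U^{(k)}\|_F^{-1}$ (the standard lower bound). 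Summing the Armijo inequalities and using the lower boundedness of $\bar{E}_n$ then yields
\begin{equation*}
\sum_{k=0}^{\infty} \frac{\langle \delta U^{(k)}, \nabla_S \bar{E}_n(U^{(k)})\rangle^2}{\|\delta U^{(k)}\|_F^2} < \infty,
\end{equation*}
which together with the angle and boundedness conditions gives $\liminf_{k\to\infty}\|\nabla_S \bar{E}_n(U^{(k)})\|_F = 0$.

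The main obstacle I anticipate is the careful bookkeeping needed to translate the Euclidean Zoutendijk argument to the Stiefel setting, in particular controlling the difference between $\bar{E}_n(\textup{ortho}(U^{(k)},\delta U^{(k)},t))$ and its second‑order Taylor model along a geodesic. This is where the retraction hypothesis is essential: without the quantitative bounds \eqref{prop4.1}–\eqref{prop4.2} the backtracking cannot be shown to terminate uniformly, and the sufficient‑decrease argument collapses. Everything else — compactness, boundedness of gradients and Hessians of $\bar{E}_n$, and the enforcement of descent/angle conditions by the two safeguarding \textbf{if} blocks — is routine given the structure of Algorithm \ref{CG}.
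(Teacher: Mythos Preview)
Your proposal is correct and follows exactly the route the paper indicates: the authors do not give a standalone proof but merely observe that $\|\nabla_S\bar{E}_n(U)\|_F\le 2(C_0+C\sqrt{N})$ on $\mathcal{M}^N_{N_g}$ and then defer to Theorem~2.7 of \cite{DZZ}. Your verification of the descent/angle conditions enforced by the two safeguarding \textbf{if}-blocks, together with the boundedness and retraction/Armijo machinery, is precisely the reduction the authors have in mind.
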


\begin{remark}
Since $\mathcal{M}^N_{N_g}\subset\mathbb{R}^{N_g\times N}$, we may solve unconstrained problem
\begin{equation}\label{miniprob3}
\min_{U\in \mathbb{R}^{N_g\times N}} \ \ \ \bar{E}_n(U),
\end{equation}
with the same $\bar{E}_n(U)$ mentioned above. It is easy to confirm that \eqref{miniprob2} and \eqref{miniprob3} have the same solution. We can use the unconstrained CG method to solve \eqref{miniprob3}.
It is observed that the unconstrained CG method deduce to the CG method for solving \eqref{newton-equation-n} as a linear system which is applied in \cite{ZBJ}.
However, the Euclidean gradient of $\bar{E}(U)$ is not bounded on $\mathbb{R}^{N_g\times N}$. Hence, the sequence generated by the unconstrained CG method may diverge.
Here, we have found that it is not necessary to solve \eqref{miniprob3} in such a large set and proposed a new algorithm which converges.
\end{remark}

\subsection{Adaptive step size strategy}
Apart from the search direction, another important issue in Newton method is the choice of the step size. In Algorithm \ref{Newton} and Algorithm \ref{CG}, an Armijo-type backtracking strategy is used. It is shown in \cite{DZZ} that the backtracking procedure is costly and may be avoided by the adaptive step size strategy proposed therein. Here, we can also apply an adaptive step size strategy which is stated in the following Algorithm \ref{adaptive}.
\begin{algorithm}\label{adaptive}
\caption{{\bf Adaptive step size strategy}~($U, D, t^{\text{initial}}, t_{\textup{min}}, \eta, \theta$)}
  Set $t=\min{(\max{(t^{\text{initial}}, t_{\textup{min}})}, \theta/\|D\|_{V^N})}$;

  Calculate estimator $$\zeta(t) = \frac{\langle \nabla_G E(U),D\rangle + \frac{t}{2}\nabla_G^2 E(U)[D, D]}{\langle \nabla_G E(U),D\rangle};$$

  \If {$\zeta(t)<\eta$}{

  Choose \begin{equation*}
t=
\begin{cases}
\min\left(-\frac{\langle \nabla_G E(U),D\rangle}{\nabla_G^2E
(U)[D, D]}, \frac{\theta}{\|D\|_F}\right), & \mbox{if
 $\nabla_G^2E(U)[D, D]>0$},\\
\frac{\theta}{\|D\|_F}, & \mbox{otherwise};
\end{cases}
\end{equation*}
}

Return $t$;
\end{algorithm}

By using such a step size choice, we present our Newton method with adaptive step size strategy as Algorithm \ref{ANewton}
\begin{algorithm}\label{ANewton}
\caption{Newton method with adaptive step size strategy}
 Give $\epsilon,  \gamma_1, \gamma_2, q\in (0,1), \eta \in (0,\frac{1}{2})$,
  initial data $U_0, \ s.t. \  U_0^TU_0 = I_N$,
  calculate gradient $\nabla_G E(U_0)$, let $n = 0$\;
 \While{$\| \nabla_G E(U_n) \|_F> \epsilon$}{
  Choose suitable $\sigma_n, \theta_n\in(0,1)$;

  Solve $D_n\in\mathcal{T}_{[U_n]}\mathcal{G}^N_{N_g}$
  by Algorithm \ref{CG}
  such that
  \begin{equation*}
\|\nabla^2_G E(U_n)[D_n]+\nabla_GE(U_n)\|_F\le\sigma_n\|\nabla_G E(U_n)\|_F;
\end{equation*}

Calculate step size $$t_n = \textup{{\bf Adaptive step size strategy}}(U_n, D_n, t_n^{\textup{init}}, 10^{-2}, \eta, \theta_n);$$

  Update
      $U_{n+1}=\textup{ortho}(U_n,D_n,t_n)$;

  Let $n=n+1$, calculate gradient $\nabla_G E(U_n)$\;
 }
\end{algorithm}

We see from the proof of Theorem \ref{thm-conv} that the convergence of Algorithm \ref{ANewton} can be derived from Theorem 3.7 of \cite{DZZ}. We state the theoretical result without proof as Theorem \ref{thm-conv2}.
\begin{theorem}\label{thm-conv2}
Let Assumptions \ref{assum_lip} and \ref{assum_hess} hold true. If $\textup{ortho}(U, D, t)$ in Algorithm \ref{ANewton} is chosen to satisfy Assumption \ref{diff}, $[U_0] \in B([U^\ast], \delta_2) \cap \mathcal{L}$, then there exist sequences $\{\sigma_n\}_{n\in \mathbb{N}_0}\subset(0,1)$ and $\{\theta_n\}_{n\in \mathbb{N}_0}\subset(0,1)$ such that for the sequence $\{U_n\}_{n\in \mathbb{N}_0}$  generated by Algorithm \ref{ANewton}, there holds either $\|\nabla_G E(U_n)\|_F = 0$ for some positive integer $n$ or
\begin{equation*}
\lim_{n\to\infty} \| \nabla_G E(U_n)\|_F  = 0.
\end{equation*}
\end{theorem}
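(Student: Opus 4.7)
The plan is to mirror the structure of the convergence proof for Algorithm \ref{Newton} (Theorem \ref{thm-conv}) while replacing the Armijo-backtracking analysis by an invocation of Theorem 3.7 of \cite{DZZ}, which is the general convergence result for the adaptive step size strategy of Algorithm \ref{adaptive}. The key insight is that the only role the backtracking played in Theorem \ref{thm-conv} was to produce a line-search scheme compatible with the sufficient-descent/angle conditions on the direction; once those conditions on $D_n$ are in place, the adaptive step rule of \cite{DZZ} is a drop-in replacement.

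First I would establish the analogue of Lemma \ref{lema1} for Algorithm \ref{ANewton}: for $\sigma_n \le \nu_1/(\nu_1+2C)$ and $\theta_n$ chosen small enough that $C_1\theta_n + \delta_2 \le \delta_1$, each iterate remains in $B([U^\ast],\delta_1) \cap \mathcal{L}$. The point is that Algorithm \ref{adaptive} always returns a step with $t\|D\|_F \le \theta_n$, so Assumption \ref{diff} yields $\|U_{n+1}-U_n\|_F \le C_1\theta_n$, keeping the iterate inside a ball of controlled radius around $[U^\ast]$. Monotone energy decrease (which is built into the adaptive strategy once $\zeta(t)\ge \eta$) then keeps the iterate inside the level set $\mathcal{L}$, so that Assumption \ref{assum_hess} remains applicable throughout the iteration.

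Next I would recycle the direction analysis from the proof of Theorem \ref{thm-conv} verbatim. Under the same choice $\sigma_n \le \nu_1/(\nu_1+2C)$, Lemma \ref{stepsizebound} gives the norm equivalence \eqref{EequivD}, inequality \eqref{negative_direction} gives the sufficient-descent bound $\langle\nabla_G E(U_n),D_n\rangle \le -\frac{\nu_1}{2}\|D_n\|_F^2$, and \eqref{not_ortho} gives a uniform lower bound on the angle between $-D_n$ and $\nabla_G E(U_n)$. Combined with the uniform gradient bound $\|\nabla_G E(U_n)\|_F \le C_0$ from \eqref{der-bound} and the Hessian upper bound \eqref{upb}, the sequence $\{D_n\}$ is therefore a uniformly gradient-related sequence of descent directions in the sense required by \cite{DZZ}, with both $\{\|D_n\|_F\}$ and $\{\nabla^2_G E(U_n)[D_n,D_n]\}$ controlled by $\{\|\nabla_G E(U_n)\|_F\}$.

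Finally, with these ingredients in place, Theorem 3.7 of \cite{DZZ} applies directly to yield either $\|\nabla_G E(U_n)\|_F = 0$ for some finite $n$ or $\lim_{n\to\infty}\|\nabla_G E(U_n)\|_F = 0$. The main obstacle I anticipate is calibrating $\theta_n$ uniformly: one needs $\theta_n$ bounded above so the trust-region cap $\theta_n/\|D_n\|_F$ preserves the $B([U^\ast],\delta_1)$ invariance of the induction, yet bounded below by some $\underline{\theta}>0$ so that the adaptive rule does not drive $t_n$ artificially small and frustrate the energy-decrease estimate that underlies Theorem 3.7 of \cite{DZZ}. A choice such as $\underline{\theta} \le \theta_n \le (\delta_1-\delta_2)/(2C_1)$ should thread this needle, since $\|D_n\|_F$ itself is bounded via \eqref{EequivD} and \eqref{der-bound}.
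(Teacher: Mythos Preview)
Your proposal is correct and follows the same approach as the paper: the paper does not give a detailed proof at all, merely stating that the convergence of Algorithm \ref{ANewton} follows from Theorem 3.7 of \cite{DZZ} by reusing the direction estimates \eqref{negative_direction}, \eqref{not_ortho}, \eqref{DleE} established in the proof of Theorem \ref{thm-conv}. Your write-up is a faithful fleshing-out of this sketch; the only place you differ slightly is in the calibration of $\theta_n$, where the paper (immediately after the theorem) gives the explicit choice in \eqref{parameters} rather than a uniform bracket $[\underline{\theta},(\delta_1-\delta_2)/(2C_1)]$, but both choices serve the same purpose of ensuring the iterates stay in $B([U^\ast],\delta_1)\cap\mathcal{L}$ while preserving sufficient decrease.
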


Here, the sequences $\{\sigma_n\}_{n\in \mathbb{N}_0}$ and $\{\theta_n\}_{n\in \mathbb{N}_0}$ can be chosen such that
\begin{eqnarray}\label{parameters}
\sigma_n &\le& \frac{\nu_1}{\nu_1+2C}, \\
\theta_n &=& \sup\{\tilde{\theta}_n: E(\textup{ortho}(U_n,D_n,t))-E(U_n)-t\langle\nabla_G E(U_n),D_n\rangle \notag \\
&&-\frac{t^2}{2}\nabla_G^2 E(U_n)[D_n,D_n]\le-\frac{\eta t\langle\nabla_G E(U_n),D_n\rangle}{2}, \forall t\le\frac{\tilde{\theta}_n}{\|D_n\|_F}\}, \notag
\end{eqnarray}
which inspires us to choose $\sigma_n$ to be a fixed constant being independent of $n$ and to choose $\theta_n$ differently for each $n$ in our numerical experiments.

\section{Numerical experiments} \label{sec-ne}
We report and analyze several numerical results in this section. We implement Algorithm \ref{Newton} (Newton-QR, with search directions given by Algorithm \ref{CG}) and Algorithm \ref{ANewton} (Newton-QR-A) based on package
Octopus\footnote[1]{Octopus:www.tddft.org/programs/octopus.} (version 4.0.1). All our numerical experiments are carried out on LSSC-IV in the State Key Laboratory of
Scientific and Engineering Computing of Chinese Academy of Sciences. In our simulation, the LDA exchange-correlation potential \cite{PeZu} is chosen to approximate $v_{xc}(\rho)$  and the Troullier-Martins
norm conserving pseudopotential \cite{TrMa} is used. The initial guess of the orbitals is generated by Linear Combination of the Atomic Orbits (LCAO) method.

Our examples include several typical molecular systems: benzene ($C_6H_6$), aspirin ($C_9H_8O_4)$,
fullerene ($C_{60}$), alanine chain $(C_{33}H_{11}O_{11}N_{11})$, carbon
nano-tube ($C_{120}$), and two carbon clusters $C_{1015}H_{460}$ and $C_{1419}H_{556}$.
We compare our results with those obtained by the conjugate gradient method proposed recently in
\cite{DLZZ} and we choose CG-QR algorithm, the algorithm that performs best
in \cite{DLZZ}, for comparison in our paper.

In our numerical experiments, we also use QR strategy (see Appendix \ref{app_retraction}) as the retraction and we set $\eta = \gamma_2 =$ 1e-4 which is recommended in \cite{NW}, and set $q=0.5, \gamma_1 = 0.1$. For all the systems except $C_{1015}H_{460}$ and $C_{1419}H_{556}$, tolerance $\epsilon$ is chosen to be $1$e$-12$, and for those two relatively large systems, $\epsilon = 1$e$-11$. Besides, we have found that the cost for solving Newton direction is expensive. To balance the accuracy of inner iteration and the total computational cost in our experiments, we set $\sigma_n = 0.4$ and terminate if the number of inner iteration reaches 3. We see from \eqref{taylor1} that in the formula of $\theta_n$ in \eqref{parameters}, $$E(\textup{ortho}(U_n,D_n,t))-E(U_n)-t\langle\nabla_G E(U_n),D_n\rangle-\frac{t^2}{2}\nabla_G^2 E(U_n)[D_n,D_n] = o(t^2\|D_n\|_F^2).$$
Hence, we may approximately choose
$$\theta_n = \big(\frac{-\eta\langle\nabla_GE(U_n), D_n\rangle}{\|D_n\|_F}\big)^{\frac{1}{1+\alpha}}, ~\alpha\in[0,1]$$
in our experiments.

We obtain from \cite{DLZZ} that the Grassmann Hessian of the Kohn-Sham total energy functional E(U) can be approximate by a part of itself, that is,
\begin{equation}\label{hes-pra}
\nabla_G^2E(U)[D_1, D_2] \approx \text{tr} ( D_2^T
\mathcal{H}(U)D_1) - \text{tr}( D_2^TD_1\Sigma), \forall U\in\mathcal{M}_{N_g}^N, D\in\mathcal{T}_{[U]}\mathcal{G}_{N_g}^N.
\end{equation}
As is pointed out in \cite{DLZZ}, the approximated
Hessian is very closed to the exact Hessian. Hence, we use the approximated Hessian \eqref{hes-pra} other than the exact Hessian \eqref{hes} in our experiments. The detailed numerical results are listed in Table \ref{t1}, where ``iter" means the number of iterations required to terminate the algorithm, $\| \nabla_G E\|_F$ forms the norm of the gradient when the algorithm terminates, ``wall clock time" is the total wall clock time spent to converge.

\begin{center}
\begin{table}[!htbp]
\caption{The numerical results for systems with different sizes obtained by different algorithms.}
\label{t1}
\begin{center}
{\small
\begin{tabular}{|c| c c c c|}
\hline
algorithm & energy (a.u.) & iter &  $\| \nabla_G E\|_F $ & wall clock time (s)\\
\hline
\multicolumn{5}{|c|}{benzene($C_6H_6) \ \ \  N_g = 102705 \ \ \  N = 15 \ \ \  cores = 8$} \\
\hline
CG-QR   & -3.74246025E+01 & 251  & 9.01E-13 & 12.58   \\
Newton-QR   & -3.74246025E+01 & 120  & 8.23E-13 & 13.30   \\
Newton-QR-A   & -3.74246025E+01 & 90  & 4.77E-13 & 8.63   \\
\hline
\multicolumn{5}{|c|}{aspirin($C_9H_8O_4) \ \ \  N_g = 133828 \ \ \  N = 34 \ \ \  cores = 16$} \\
\hline
CG-QR   & -1.20214764E+02 & 246  & 9.21E-13 & 29.21   \\
Newton-QR   & -1.20214764E+02 & 126  & 9.99E-13 & 40.28   \\
Newton-QR-A   & -1.20214764E+02 & 90 & 8.14E-13 & 27.09   \\
\hline
\multicolumn{5}{|c|}{$C_{60} \ \ \ N_g = 191805  \ \ \  N = 120 \ \ \  cores = 16$} \\
\hline
CG-QR   & -3.42875137E+02 & 391  & 9.45E-13 & 489.00   \\
Newton-QR   & -3.42875137E+02 & 196  & 9.49E-13 & 611.66   \\
Newton-QR-A   & -3.42875137E+02 & 149  & 6.68E-13 & 358.10   \\
\hline
\multicolumn{5}{|c|}{alanine chain$(C_{33}H_{11}O_{11}N_{11})\ \ \  N_g = 293725 \ \ \  N = 132
\ \ \  cores = 32$} \\
\hline
CG-QR   & -4.78562217E+02 & 2100  & 9.98E-13 & 2789.83   \\
Newton-QR   & -4.78562217E+02 & 1254  & 8.23E-13 & 5131.73   \\
Newton-QR-A   & -4.78562217E+02 & 1022  & 9.03E-13 & 2718.46   \\
\hline
\multicolumn{5}{|c|}{$C_{120} \ \ \ N_g = 354093 \ \ \  N = 240 \ \ \  cores = 32$} \\
\hline
CG-QR   & -6.84467048E+02 & 3517  & 9.90E-13 & 12976.96   \\
Newton-QR   & -6.84467048E+02 & 1806  & 9.99E-13 & 27247.76   \\
Newton-QR-A   & -6.84467048E+02 & 1291  & 9.83E-13 & 11402.85   \\
\hline
\multicolumn{5}{|c|}{$C_{1015}H_{460} \ \ \ N_g = 1462257 \ \ \  N = 2260 \ \ \  cores = 256$} \\
\hline
CG-QR   & -6.06369982E+03 & 266        & 9.17E-12 & 299047.84 \\
Newton-QR   & -6.06369982E+03 & 202        & 8.10E-12 &  446765.25\\
Newton-QR-A   & -6.06369982E+03 & 114        & 9.54E-12 & 208441.11 \\
\hline
\multicolumn{5}{|c|}{$C_{1419}H_{556} \ \ \ N_g = 1828847 \ \ \  N = 3116 \ \ \  cores = 320$} \\
\hline
CG-QR   & -8.43085432E+03 & 272         & 9.71E-12 & 722678.98 \\
Newton-QR   & -8.43085432E+03 &   178       & 8.30E-12 & 876253.24 \\
Newton-QR-A   & -8.43085432E+03 &  139    & 9.82E-12 & 584067.42 \\
\hline
\end{tabular}}
\end{center}
\end{table}
\end{center}

We see from Table \ref{t1} that the Newton method converges to solutions with desired accuracy within less iterations compared with the CG method. However, it still needs much computational time than the CG method because determining the Newton search direction is more costly than obtaining the CG direction and the backtracking procedure is also a very expensive part in Newton method. By using the adaptive step size strategy, we see that both the number of iterations and computational time are reduced, which make the Newton method with adaptive step size strategy to be more efficient than the CG method.

We also show the convergence curves for several systems in Figure \ref{fig-c9h8o4-resi}-\ref{fig-c1015-resi} to illustrate the advantages of Newton method more clear.

\begin{figure}
\centering
\caption{Convergence curves for $\| \nabla_G E\|_F $ obtained by
 different algorithms for $C_9H_8O_4$.}
\includegraphics[width=0.8\textwidth]{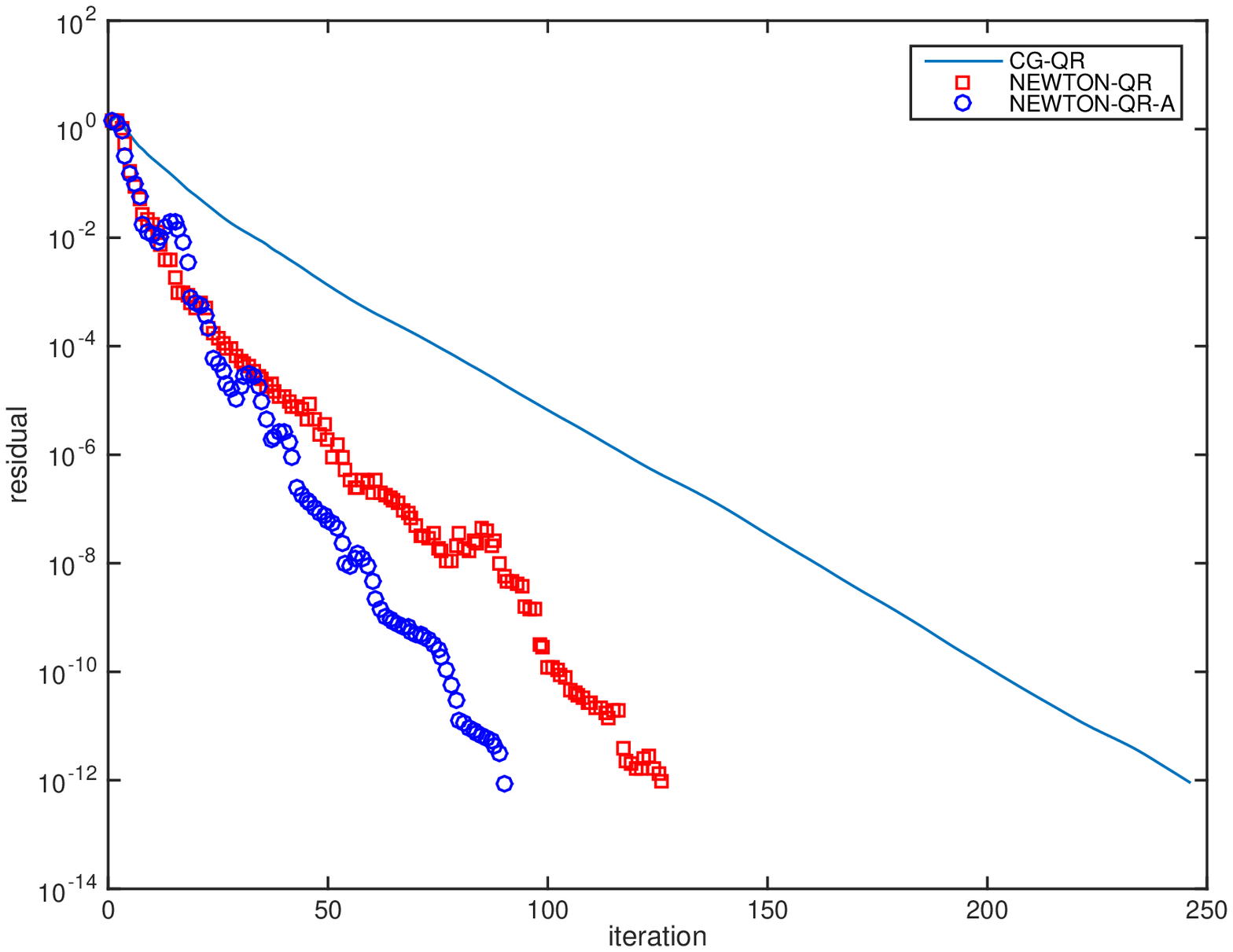} \\
\label{fig-c9h8o4-resi}
\end{figure}

\begin{figure}
\centering
\caption{Convergence curves for $\| \nabla_G E\|_F $ obtained by
 different algorithms for $C_{60}$.}
\includegraphics[width=0.8\textwidth]{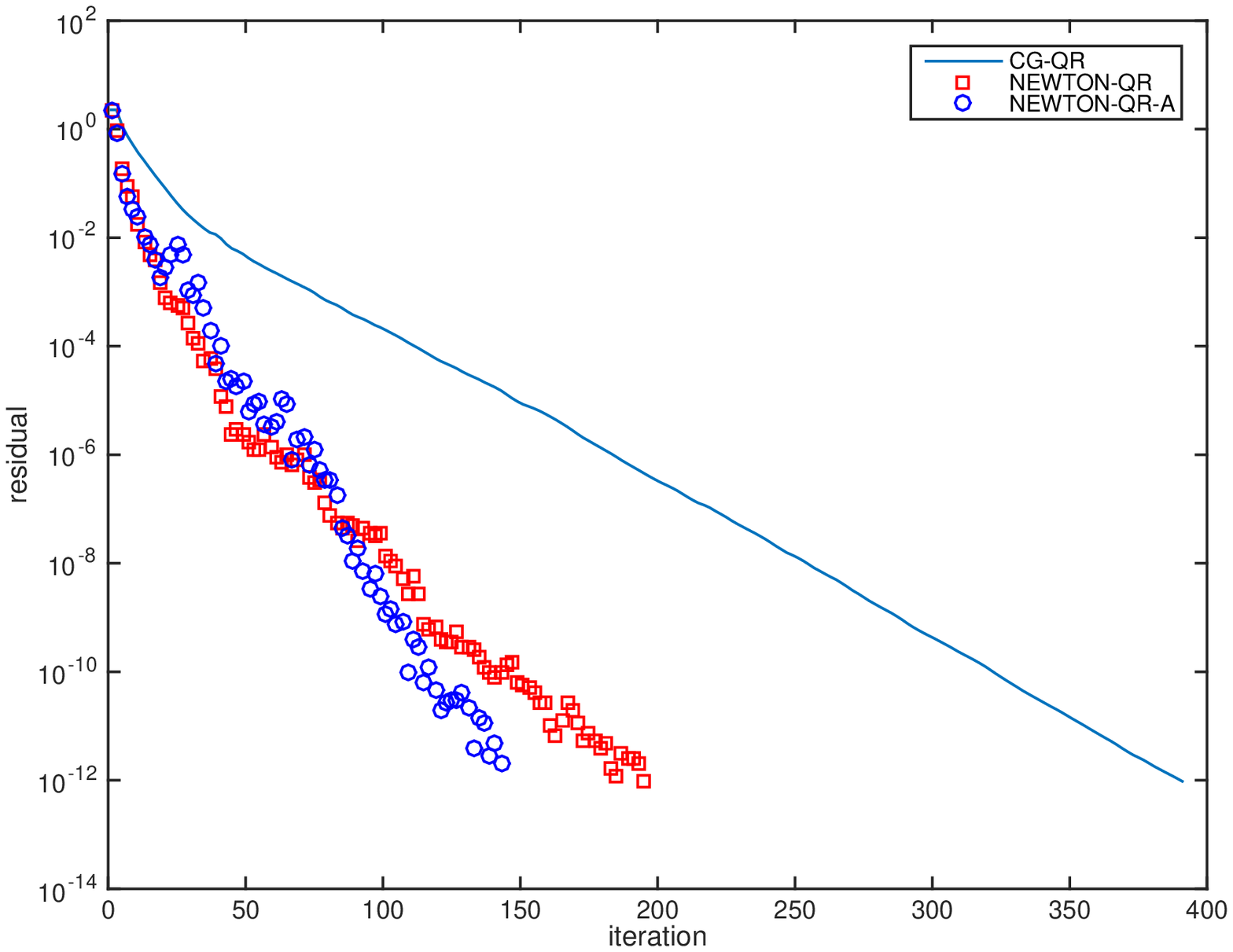} \\
\end{figure}

\begin{figure}
\centering
\caption{Convergence curves for $\| \nabla_G E\|_F $ obtained by
 different algorithms for alanine.}
\includegraphics[width=0.8\textwidth]{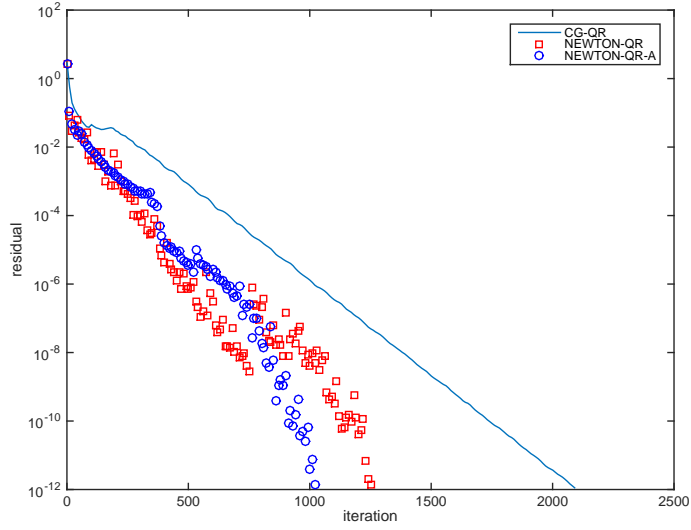} \\
\end{figure}

\begin{figure}
\centering
\caption{Convergence curves for $\| \nabla_G E\|_F $ obtained by
 different algorithms for $C_{120}$.}
\includegraphics[width=0.8\textwidth]{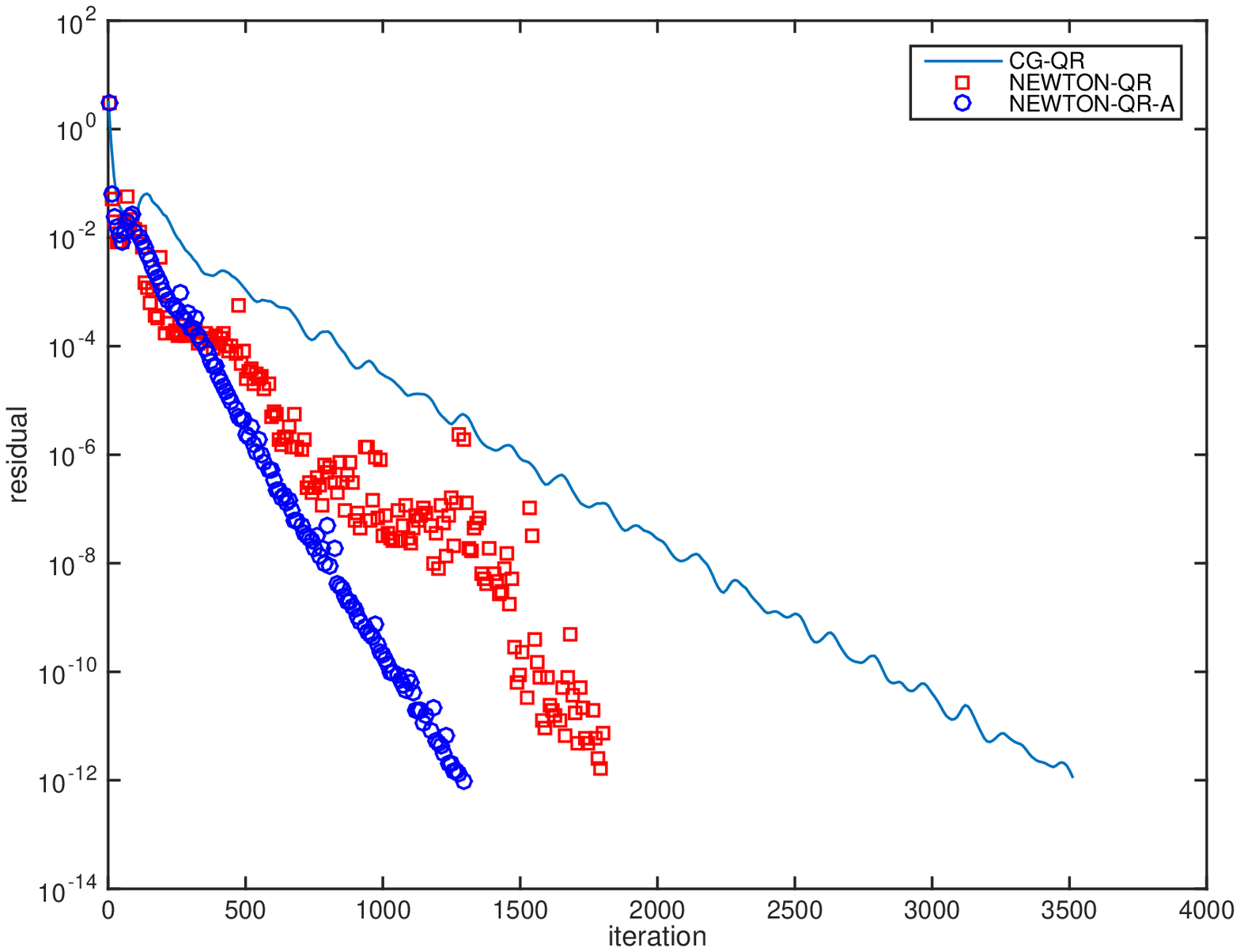} \\
\end{figure}

\begin{figure}
\centering
\caption{Convergence curves for $\| \nabla_G E\|_F $ obtained by
 different algorithms for $C_{1015}H_{460}$.}
\includegraphics[width=0.8\textwidth]{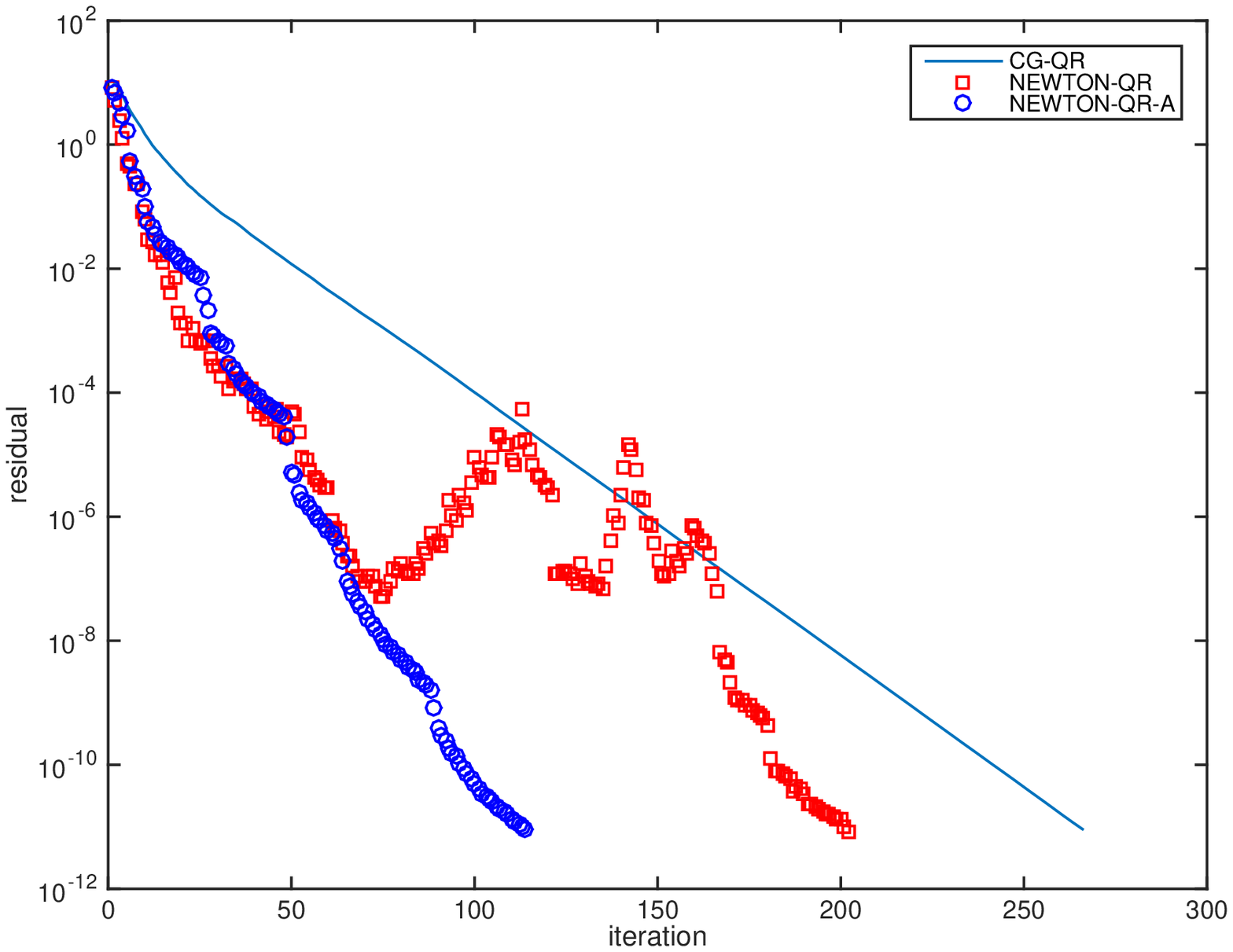} \\
\label{fig-c1015-resi}
\end{figure}


As is shown in Figure \ref{fig-c9h8o4-resi}-\ref{fig-c1015-resi}, the iteration sequences generated by Newton methods converge to the critical point more rapidly than the CG method. In particular, the Newton method with adaptive step size strategy converges much more stable and quickly than that with backtracked step sizes. Consequently, the Newton method equipped with the adaptive step size strategy is recommended.

\section{Concluding remarks} \label{sec-cln}
In this paper, we have studied the Newton methods for electronic structure calculations and shown the local convergence and convergence rate of the backtracking-based Newton method when the Newton directions are solved accurate enough. We have, in further, investigated the search direction solver which enables us to obtain satisfactory search directions during the iterations. Besides, we have applied an adaptive step size strategy \cite{DZZ}  to the Newton method which both maintains the convergence of the Newton method and makes the Newton method more practical in implementation. We have also reported several numerical experiments for different type of systems to show
that our methods are more efficient than the existing CG method. When comparing our methods themselves, we conclude that the Newton method with adaptive step size strategy performs better than that with the classic backtracking-based algorithm. Consequently, the Newton method with adaptive step size strategy is highly recommended.

We would like to mention that if there is a faster algorithm for solving \eqref{newton-equation}, which is usually
considered as ill-conditioned problem with large scale, then the efficiency of the Newton method will be further enhanced. This is indeed our ongoing work.

\Appendix
\renewcommand{\appendixname}{Appendix~\Alph{section}}
\section{Discussions on retractions}\label{app_retraction}
There are several orthogonality preserving strategies such as QR strategy, polar decomposition(PD) strategy and the so called Wen-Yin(WY) strategy (c.f. \cite{AbMaSe, WY, ZZWZ}). For these three strategies, the specific forms of $\text{ortho}(U,D,t)$ are as follows:

\begin{itemize}
      \item{for WY:}\begin{eqnarray*}
       \text{ortho}_{\textup{WY}}(U,D,t) &=& U + t D\Big(I_N+\frac{t^2}{4}
       D^TD  \Big)^{-1} \\
       && - \frac{t^2}{2}U\Big(I_N+\frac{t^2}{4}
         D^TD  \Big)^{-1}  (D^TD)  ;
      \end{eqnarray*}
      \item{for QR:}
      \begin{equation*}
          \text{ortho}_{\textup{QR}}(U,D,t) = (U + t D)L^{-T},
       \end{equation*}
       where $L$ is the lower triangular matrix such that
       \begin{equation*}
         LL^T = I_N + t^2
        D^T D   ;
       \end{equation*}
       \item{for PD:}\begin{equation*}
         \text{ortho}_{\textup{PD}}(U,D,t)=(U + t D)\big(I_N + t^2
        D^T D  \big)^{-\frac{1}{2}}.
      \end{equation*}\\
\end{itemize}

In \cite{JD}, Jiang and Dai has proposed a framework for designing orthogonality preserving operators as well as some specific schemes. It is worth mentioning that not all of the schemes therein are retractions.

Here, we would like to present a new class of retractions which are different from the existing approaches. We named our new strategies as Geodesic Approximation(GA) strategies.

If we let
\begin{equation} \label{antisy}
\mathcal{W} = DU^T - UD^T,
\end{equation}
then there holds $\mathcal{W}U=D$ and $\|\mathcal{W}\|_F\le2\|D\|_F$.

Since the geodesic \eqref{geodesic3} is the unique solution of
\begin{equation}
\left\{
\begin{aligned}
\overset{..}\Gamma(t) + \Gamma(t)\big(\overset{.}\Gamma(t)^T\overset{.}\Gamma(t)\big)& = 0, \\
\Gamma(0) & = U, \\
\overset{.}\Gamma(0) & = D,
\end{aligned}
\right.
\end{equation}
we see that the geodesic strategy can be expressed as
$$\textup{ortho}_{\textup{geo}}(U,D,t) = \Gamma(t) = e^{t\mathcal{W}}U,$$
where $e^{t\mathcal{W}}$ stands for the exponential mapping.

Though the geodesic method is the most straight forward and important strategy on solving manifold minimization problems, it is difficult to give the explicit form of the exponential mapping. Therefore, some approximations are required. Our GA strategies are then defined as
\begin{equation}\label{Utilde}
\textup{ortho}_{GA}(U, D, t)=\Big(I - \frac{t}{2} \mathcal{W} + P(t, \mathcal{W})^T\Big)^{-1} \Big(I + \frac{t}{2}
\mathcal{W} + P(t, \mathcal{W})\Big)U,
\end{equation}
where $P(t, \mathcal{W})$ satisfies the following conditions:
\begin{itemize}
\item $P(t, \mathcal{W})$ is commutative to $\mathcal{W}$ and $P(t, \mathcal{W})^T$, that is,
\begin{eqnarray}\label{commute}
    P(t, \mathcal{W})\mathcal{W}&=&\mathcal{W}P(t, \mathcal{W}),   \\
    P(t, \mathcal{W})P(t, \mathcal{W})^T &=& P(t, \mathcal{W})^TP(t, \mathcal{W}), \ \forall t\in\mathbb{R}. \notag
\end{eqnarray}
\item \begin{equation}\label{order}
\|P(t, \mathcal{W})\|_F=O(t^2\|D\|_F^2)
\end{equation} as $t\|D\|_F$ close to 0.
\item \begin{equation}\label{order2}
\|P(t, \mathcal{W})-P(t, \mathcal{W})^T\|_F=O(t^3\|D\|_F^3)
\end{equation} as $t\|D\|_F$ close to 0.
\end{itemize}

The following lemmas show that such a $\textup{ortho}_{GA}(U,D,t)$ is retraction.
\begin{lemma}
Suppose $\textup{ortho}_{GA}(U,D,t)$ is defined as \eqref{Utilde}, then $$\textup{ortho}_{GA}(U,D,t)^T\textup{ortho}_{GA}(U,D,t)=I_N, \ \ \forall t\in \mathbb{R}.$$
\end{lemma}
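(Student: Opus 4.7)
The plan is to exploit the fact that the two bracketed factors in the definition of $\textup{ortho}_{GA}$ are transposes of one another, and then show that each is a normal matrix so that polynomials in it and its transpose commute freely.

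First, abbreviate $P = P(t,\mathcal{W})$ and set
\[
A := I + \tfrac{t}{2}\mathcal{W} + P,\qquad B := I - \tfrac{t}{2}\mathcal{W} + P^T.
\]
Because $\mathcal{W}^T = (DU^T - UD^T)^T = -\mathcal{W}$, taking the transpose of $A$ gives $A^T = I - \tfrac{t}{2}\mathcal{W} + P^T = B$. Hence $\textup{ortho}_{GA}(U,D,t) = (A^T)^{-1}A U$, and so
\[
\textup{ortho}_{GA}(U,D,t)^T\textup{ortho}_{GA}(U,D,t) = U^T A^T\,(A^T)^{-T}(A^T)^{-1}A\,U = U^T A^T A^{-1}(A^T)^{-1}A\,U.
\]

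Next I would verify that $A$ is normal, i.e.\ $AA^T = A^T A$. Expanding both products and collecting terms gives, in each case,
\[
I + P + P^T - \tfrac{t^2}{4}\mathcal{W}^2 + \tfrac{t}{2}\bigl(\mathcal{W}P^T - P\mathcal{W}\bigr) + PP^T
\]
and
\[
I + P + P^T - \tfrac{t^2}{4}\mathcal{W}^2 + \tfrac{t}{2}\bigl(P^T\mathcal{W} - \mathcal{W}P\bigr) + P^T P
\]
respectively. The hypothesis \eqref{commute} gives $P\mathcal{W}=\mathcal{W}P$ and $PP^T = P^TP$; transposing $P\mathcal{W}=\mathcal{W}P$ (and using $\mathcal{W}^T=-\mathcal{W}$) also yields $P^T\mathcal{W}=\mathcal{W}P^T$. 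With these three identities the two expansions coincide, so $AA^T = A^T A$.

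Finally, since $A$ and $A^T$ commute, so do all rational functions in them that are defined; in particular $A^T A^{-1} = A^{-1} A^T$. Substituting into the display above,
\[
U^T A^T A^{-1}(A^T)^{-1}A\,U = U^T A^{-1}A^T(A^T)^{-1}A\,U = U^T A^{-1}A\,U = U^T U = I_N,
\]
which is the desired identity. The only mild technicality is confirming invertibility of $B=A^T$ wherever $\textup{ortho}_{GA}$ is meant to be defined; I would note that for small $t\|D\|_F$ this is immediate from $\|P\|_F = O(t^2\|D\|_F^2)$, and in general it is implicit in writing down the formula \eqref{Utilde}. The main conceptual step is really just recognizing the normality of $A$; once that is in hand the orthogonality identity falls out from elementary matrix algebra.
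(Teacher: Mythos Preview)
Your proof is correct and follows essentially the same approach as the paper: both arguments reduce to showing that the factors $A=I+\tfrac{t}{2}\mathcal{W}+P$ and $A^T=I-\tfrac{t}{2}\mathcal{W}+P^T$ commute (i.e.\ $A$ is normal), after which the middle terms cancel to leave $U^TU=I_N$. The paper simply asserts this cancellation with a reference to \eqref{commute}, while you spell out the normality computation explicitly.
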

\begin{proof}
We have
\begin{eqnarray*}
\textup{ortho}_{GA}(U,D,t)^T\textup{ortho}_{GA}(U,D,t)&=&U^T\Big(I - \frac{t}{2}
\mathcal{W} + P(t, \mathcal{W})^T\Big)\Big(I + \frac{t}{2}
\mathcal{W} + P(t, \mathcal{W})\Big)^{-1} \\
&&\Big(I - \frac{t}{2} \mathcal{W} + P(t, \mathcal{W})^T\Big)^{-1} \Big(I + \frac{t}{2}
\mathcal{W} + P(t, \mathcal{W})\Big)U \\
&=&U^TU = I_N,
\end{eqnarray*}
in which \eqref{commute} is used. 
\end{proof}

\begin{lemma}
 Suppose $\textup{ortho}_{GA}(U,D,t)$ is defined as \eqref{Utilde}, then
 \begin{eqnarray}\label{prop}
 \textup{ortho}_{GA}(U,D,0)&=&U, \\
 \frac{\partial}{\partial t}\textup{ortho}_{GA}(U,D,0)&=&D.
 \end{eqnarray}
\end{lemma}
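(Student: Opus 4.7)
The plan is to verify the two retraction defining properties directly from the definition \eqref{Utilde} together with the hypothesis \eqref{order}. I will abbreviate
\[
A(t) = I - \tfrac{t}{2}\mathcal{W} + P(t,\mathcal{W})^T, \qquad B(t) = I + \tfrac{t}{2}\mathcal{W} + P(t,\mathcal{W}),
\]
so that $\textup{ortho}_{GA}(U,D,t) = A(t)^{-1} B(t) U$. The first identity $\textup{ortho}_{GA}(U,D,0)=U$ is the easy half: from \eqref{order} we have $\|P(0,\mathcal{W})\|_F = O(0) = 0$, hence $P(0,\mathcal{W})=0$ and therefore $A(0)=B(0)=I$, giving $\textup{ortho}_{GA}(U,D,0)=I^{-1}\cdot I\cdot U = U$.

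For the derivative identity I would differentiate under the matrix-inverse rule,
\[
\frac{\partial}{\partial t}\textup{ortho}_{GA}(U,D,t) \;=\; -A(t)^{-1} A'(t) A(t)^{-1} B(t) U \;+\; A(t)^{-1} B'(t) U,
\]
and then evaluate at $t=0$. Since $A(0)=B(0)=I$, the expression collapses to $-A'(0)U + B'(0)U$. The key computational step is to show $P'(0,\mathcal{W})=0$; this follows from \eqref{order}, because $\|P(t,\mathcal{W})\|_F = O(t^2\|D\|_F^2)$ forces the difference quotient $P(t,\mathcal{W})/t$ to tend to zero as $t\to 0$. Applying the same to $P(t,\mathcal{W})^T$ (by transposing), one gets $A'(0) = -\tfrac{1}{2}\mathcal{W}$ and $B'(0) = \tfrac{1}{2}\mathcal{W}$. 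Hence
\[
\frac{\partial}{\partial t}\textup{ortho}_{GA}(U,D,0) \;=\; \tfrac{1}{2}\mathcal{W}U + \tfrac{1}{2}\mathcal{W}U \;=\; \mathcal{W}U.
\]

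Finally I would use the explicit form $\mathcal{W} = DU^T - UD^T$ together with the tangency condition $D\in \mathcal{T}_{[U]}\mathcal{G}^N_{N_g}$, which by \eqref{dis-tangent} gives $U^TD = 0$ and therefore $D^TU=0$. Consequently $\mathcal{W}U = DU^TU - U(D^TU) = D$, completing the identification $\frac{\partial}{\partial t}\textup{ortho}_{GA}(U,D,0) = D$.

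I do not anticipate a serious obstacle; the only subtle point is extracting $P'(0,\mathcal{W})=0$ cleanly from the $O(t^2\|D\|_F^2)$ bound in \eqref{order}, and noting that nowhere in the argument do we need the commutativity hypothesis \eqref{commute} or the symmetry estimate \eqref{order2}—those are needed for the orthogonality of $\textup{ortho}_{GA}(U,D,t)$ and for Assumption \ref{lema-smdif}, respectively, but the retraction conditions at $t=0$ depend only on $P$ being of order $t^2$.
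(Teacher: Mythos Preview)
Your proposal is correct and follows essentially the same approach as the paper: both extract $P(0,\mathcal{W})=0$ and $P'(0,\mathcal{W})=0$ from \eqref{order}, then differentiate to obtain $\mathcal{W}U=D$. The only cosmetic difference is that the paper differentiates the implicit relation $A(t)\,\textup{ortho}_{GA}(U,D,t)=B(t)U$ and then sets $t=0$, whereas you apply the matrix-inverse derivative rule directly to $A(t)^{-1}B(t)U$; these are trivially equivalent computations.
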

\begin{proof}
For any fixed $D\in\mathcal{T}_{[U]}\mathcal{G}_{N_g}^N$, we obtain from \eqref{order} that
\begin{equation}
\lim_{t\to 0}  \ \ \frac{\|P(t, \mathcal{W})\|_F}{t}=0,
\end{equation}
which implies $ P(0, \mathcal{W})=0$ and $\textup{ortho}_{GA}(U,D,0)=U$ is a straight forward result. Besides,
\begin{equation}\label{derivativezero}
  \|\frac{\partial}{\partial t}P(0, \mathcal{W})\|_F=\lim_{t\to 0}  \ \ \frac{\|P(t, \mathcal{W})-P(0, \mathcal{W})\|_F}{t}=0.
\end{equation}
Hence, $\frac{\partial}{\partial t}P(0, \mathcal{W})=0$. In addition, there holds from \eqref{Utilde} that
\begin{equation*}
\Big(I - \frac{t}{2} \mathcal{W} + P(t, \mathcal{W})^T\Big) \textup{ortho}_{GA}(U,D,t)= \Big(I + \frac{t}{2}\mathcal{W} + P(t, \mathcal{W})\Big)U.
\end{equation*}
Taking the derivative of both side with respect to $t$ gives that
\begin{eqnarray}\label{deriv}
&&\Big(-\frac{1}{2} \mathcal{W} - \frac{\partial}{\partial t}P(t, \mathcal{W})^T\Big) \textup{ortho}_{GA}(U,D,t) \\
&+& \Big(I - \frac{t}{2} \mathcal{W} + P(t, \mathcal{W})^T\Big) \frac{\partial}{\partial t}\textup{ortho}_{GA}{dt}(U,D,t)= \Big(\frac{1}{2}\mathcal{W} + \frac{\partial}{\partial t}P(t, \mathcal{W})\Big)U. \notag
\end{eqnarray}
Let $t=0$ in \eqref{deriv}, we have
\begin{equation*}
-\frac{1}{2}\mathcal{W}U+\frac{\partial}{\partial t}\textup{ortho}_{GA}{dt}(U,D,0)=\frac{1}{2}\mathcal{W}U,
\end{equation*}
or equivalently,
\begin{equation*}
\frac{\partial}{\partial t}\textup{ortho}_{GA}{dt}(U,D,0)=\mathcal{W}U=D.
\end{equation*}
The definition of $\mathcal{W}$ is used in the last equality which completes the proof.
\end{proof}

There are infinite number of GA approaches. We present some examples here.

\newtheorem{exam}[theorem]{Example}
\begin{exam}
Choose $P(t, \mathcal{ W})=\bf{0}$, for which \eqref{commute}, \eqref{order} and \eqref{order2} are satisfied. Then the GA strategy degenerates to the WY strategy, that is,
$$\textup{ortho}_{GA}(U,D,t)=\Big(I - \frac{t}{2} \mathcal{W}\Big)^{-1} \Big(I + \frac{t}{2}
\mathcal{W}\Big)U=\textup{ortho}_{WY}(U,D,t).$$
We refer to \cite{DLZZ, WY} for more details about the WY strategy.
\end{exam}
\begin{exam}\label{poly}
Choose $$P(t, \mathcal{W})=t^2\sum_{i=0}^kc_i(t\mathcal{W})^i\mathcal{W}^2,$$ with $c_i$ be some real coefficients, then $P(t, \mathcal{W})=o(t^2\|W\|^2)=o(t^2\|D\|^2)$ and
\begin{eqnarray*}
\|P(t, \mathcal{W})-P(t, \mathcal{W})^T\|_F &=& t^2\|\sum_{i=1}^{\lfloor\frac{k+1}{2}\rfloor}c_{2i-1}(t\mathcal{W})^{2i-1}\|_F\|\mathcal{W}\|_F^2 \\
&=&o(t^3\|\mathcal{W}\|_F^3)=o(t^3\|D\|_F^3).
\end{eqnarray*}
In addition, we see that $P(t, \mathcal{W})$ is a polynomial of $\mathcal{W}$ so that it is commutative to $\mathcal{W}$ and $P(t, \mathcal{W})^T$.
As a result, $$P(t, \mathcal{W})=t^2\sum_{i=0}^kc_i(t\mathcal{W})^i\mathcal{W}^2$$ satisfies \eqref{commute}, \eqref{order} and \eqref{order2}.
\end{exam}

We observe that the WY strategy is nothing but the $(1,1)$ type Pad\'{e} approximation of the exponential mapping. We can obviously spread this idea to $(k,k)$ type Pad\'{e} approximation, which is more closed to the geodesic than the WY strategy, for instance,
\begin{eqnarray*}
&\textup{ortho}_{(2,2)}(U,D,t)&=\Big(I - \frac{t}{2} \mathcal{W} + \frac{t^2}{12}\mathcal{W}^2\Big)^{-1}\Big(I + \frac{t}{2}
\mathcal{W}+ \frac{t^2}{12}\mathcal{W}^2\Big)U \\
&\textup{ortho}_{(3,3)}(U,D,t)&=\Big(I - \frac{t}{2} \mathcal{W} + \frac{t^2}{10}\mathcal{W}^2 - \frac{t^3}{120}\mathcal{W}^3\Big)^{-1}\Big(I + \frac{t}{2}
\mathcal{W}+ \frac{t^2}{10}\mathcal{W}^2 + \frac{t^3}{120}\mathcal{W}^3\Big)U \\
&\cdots&
\end{eqnarray*}

They are all included in Example \ref{poly}.
\begin{remark}
It can be check that the similar formula which based on the $(p,q)$ type Pad\'{e} approximation with $p\neq q$, does not preserve the orthogonality.
\end{remark}

We last show that the geodesic $\textup{ortho}_{geo}(U,D,t)$ can also be expressed as the form of \eqref{Utilde}.
\begin{exam}
Choosing $P(t, \mathcal{W})=e^{\frac{t\mathcal{W}}{2}}-I-\frac{t\mathcal{W}}{2}$, we can check by the properties of exponential mapping that \eqref{commute}, \eqref{order} and \eqref{order2} are satisfied for this $P(t, \mathcal{W})$.
Taking $P(t, \mathcal{W})$ into \eqref{Utilde}, we have
\begin{eqnarray*}
\textup{ortho}_{GA}(U,D,t)&=&\Big(I - \frac{t}{2} \mathcal{W} + e^{-\frac{t\mathcal{W}}{2}}-I+\frac{t\mathcal{W}}{2}\Big)^{-1} \\
&&\Big(I + \frac{t}{2}\mathcal{W} + e^{\frac{t\mathcal{W}}{2}}-I-\frac{t\mathcal{W}}{2}\Big)U \\
&=&e^{\frac{t\mathcal{W}}{2}}e^{\frac{t\mathcal{W}}{2}}U = e^{t\mathcal{W}}U=\textup{ortho}_{geo}(U,D,t).
\end{eqnarray*}
\end{exam}

This indicates that the geodesic method itself is also contained in our proposed GA methods.

We then show that all the mentioned retractions satisfy Assumption \ref{lema-smdif}.
\begin{theorem}
Let $\textup{ortho}_1(U,D,t)$ and $\textup{ortho}_2(U,D,t)$ to be two arbitrary retractions of the QR, PD, WY or GA strategy. Then for any $D\in\mathcal{T}_{[U]}\mathcal{G}^N_{N_g}$, with $t\|D\|_F<1$, there holds
\begin{equation*}
E(\textup{ortho}_1(U,D,t))-E(\textup{ortho}_2(U,D,t)) = O(t^3\|D\|_F^3).
\end{equation*}
\end{theorem}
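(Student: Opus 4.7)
The plan is to Taylor expand both retractions around $U$ in the ambient space $\mathbb{R}^{N_g\times N}$, exploit the fact that \emph{all} retractions agree on the first two terms ($U$ and $tD$), and then show that the difference at second order is always of the form $\frac{t^2}{2}\,U A$ with $A$ antisymmetric — a direction the energy functional cannot see because of its orthogonal invariance \eqref{orth-invar}.

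First I would compute the $t$-expansion of each retraction up to $O(t^3\|D\|_F^3)$. For the PD strategy one uses $(I + t^2 D^TD)^{-1/2} = I - \frac{t^2}{2}D^TD + O(t^4\|D\|_F^4)$, giving $\textup{ortho}_{\textup{PD}}(U,D,t) = U + tD - \frac{t^2}{2}UD^TD + O(t^3\|D\|_F^3)$. For WY the same expansion drops out of the two geometric-series terms. For any GA scheme, I would multiply out
\[
\big(I - \tfrac{t}{2}\mathcal{W}+P^T\big)^{-1}\big(I + \tfrac{t}{2}\mathcal{W}+P\big)U
= \big(I+t\mathcal{W}+\tfrac{t^2}{2}\mathcal{W}^2+(P-P^T)\big)U + O(t^3\|D\|_F^3),
\]
absorb $P-P^T$ using \eqref{order2}, and use $\mathcal{W}U=D$ together with $U^TD=0$ to get $\mathcal{W}^2U=-UD^TD$. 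Thus PD, WY, and every GA scheme share the expansion $U + tD - \frac{t^2}{2}UD^TD + O(t^3\|D\|_F^3)$. For QR, with $L = I + t^2 M + O(t^4)$ where $M$ is lower triangular and $M+M^T = D^TD$, one obtains $\textup{ortho}_{\textup{QR}}(U,D,t) = U + tD - t^2 UM^T + O(t^3\|D\|_F^3)$.

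Next I would form the difference. Between any two retractions in $\{\textup{WY},\textup{PD},\textup{GA}\}$, the second-order terms cancel and $\textup{ortho}_1 - \textup{ortho}_2 = O(t^3\|D\|_F^3)$, so the bound is trivial. The only non-trivial case is QR versus one of the others, where
\[
\textup{ortho}_{\textup{QR}}(U,D,t) - \textup{ortho}_{\textup{WY}}(U,D,t) = \tfrac{t^2}{2}\,U\big(D^TD - 2M^T\big) + O(t^3\|D\|_F^3).
\]
A direct entrywise check using $M_{ii}=\tfrac{1}{2}(D^TD)_{ii}$, $M_{ij}=(D^TD)_{ij}$ for $i>j$, and $(D^TD)^T=D^TD$, shows $A:=D^TD - 2M^T$ is antisymmetric.

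Finally I would Taylor expand $E$ around $U$ in $\mathbb{R}^{N_g\times N}$. Writing $X_i := \textup{ortho}_i(U,D,t) - U = O(t\|D\|_F)$, we get
\[
E(\textup{ortho}_i) - E(U) = \langle\nabla E(U), X_i\rangle + \tfrac{1}{2}\nabla^2 E(U)[X_i,X_i] + O(t^3\|D\|_F^3),
\]
so $E(\textup{ortho}_1) - E(\textup{ortho}_2)= \langle\nabla E(U),X_1-X_2\rangle + \tfrac{1}{2}\nabla^2 E(U)[X_1+X_2,X_1-X_2] + O(t^3\|D\|_F^3)$. Since $X_1+X_2 = O(t\|D\|_F)$ and $X_1-X_2 = O(t^2\|D\|_F^2)$, the Hessian term is $O(t^3\|D\|_F^3)$ by \eqref{upb1}. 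For the gradient term, $\langle\nabla E(U), UA\rangle = \textup{tr}(U^T\nabla E(U)\,A) = \textup{tr}(\Sigma A)$ where $\Sigma=U^T\mathcal{H}(U)U$ is symmetric, so the trace vanishes against the antisymmetric $A$. Hence $\langle\nabla E(U), X_1-X_2\rangle = O(t^3\|D\|_F^3)$ and the theorem follows.

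The main obstacle is the algebraic bookkeeping for the QR retraction: one must carefully use the triangularity of $M$ to identify which part of $D^TD$ survives in the second-order coefficient and verify that the surviving discrepancy is exactly of the vertical (antisymmetric) form — without that structural identification, the argument would only yield the weaker $O(t^2\|D\|_F^2)$ bound one reads off from \eqref{differfromuptd}.
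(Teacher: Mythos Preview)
Your argument is correct. The expansions for PD, WY, and GA are right, and your treatment of QR via the lower-triangular $M$ with $M+M^T=D^TD$ correctly identifies the leftover second-order term as $\tfrac{t^2}{2}UA$ with $A=D^TD-2M^T$ antisymmetric; the vanishing of $\operatorname{tr}(\Sigma A)$ then finishes the job.

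The paper proceeds somewhat differently, and the contrast is worth noting. For WY, PD, and GA the paper does essentially what you do (it bounds $\ddot{\textup{ortho}}(U,D,t)+UD^TD$ and integrates, rather than expanding directly, but the content is the same). The real divergence is in the handling of QR. Instead of expanding $L^{-T}$ and chasing the antisymmetry, the paper observes the one-line geometric fact that $\textup{ortho}_{\textup{QR}}(U,D,t)$ and $\textup{ortho}_{\textup{PD}}(U,D,t)$ both orthonormalize the columns of $U+tD$ and hence have the \emph{same column space}; therefore $[\textup{ortho}_{\textup{QR}}(U,D,t)]=[\textup{ortho}_{\textup{PD}}(U,D,t)]$ on the Grassmann manifold and $E(\textup{ortho}_{\textup{QR}}(U,D,t))=E(\textup{ortho}_{\textup{PD}}(U,D,t))$ \emph{exactly}, not merely to $O(t^3\|D\|_F^3)$. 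This sidesteps all of your triangular bookkeeping. Your route, on the other hand, makes explicit the mechanism you allude to in your last paragraph: the discrepancy lives in the vertical space $\{UA:A^T=-A\}$, which is precisely what the orthogonal invariance \eqref{orth-invar} annihilates at first order. That viewpoint would generalize to other retractions whose second-order term differs from $-\tfrac{1}{2}UD^TD$ only by a vertical piece, whereas the paper's shortcut is specific to QR versus PD.
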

\begin{proof}
For simplicity, we denote the first and second order derivative of the retraction $\textup{ortho}(U,D,t)$ with respective to $t$ by $\dot{\textup{ortho}}(U,D,t)$ and $\ddot{\textup{ortho}}(U,D,t)$, respectively. We first show that for the WY and PD strategy, there exist a constant $M_1>0$ such that
\begin{equation}\label{2ndder}
\|\ddot{\textup{ortho}}(U,D,t)+UD^TD\|_F\leq M_1t\|D\|_F^3, \forall (t,D)\in\mathbb{R}\times\mathcal{T}_{[U]}\mathcal{G}_{N_g}^N,t\|D\|_F\leq1.
\end{equation}
In fact, it can be calculated that
\begin{eqnarray*}
\ddot{\textup{ortho}}_{WY}(U,D,t)&=&\big(I-\frac{t\mathcal{W}}{2}\big)^{-1}\mathcal{W}\dot{\textup{ortho}}_{WY}(U,D,t), \\
\ddot{\textup{ortho}}_{PD}(U,D,t)&=&t^2\textup{ortho}_{PD}(U,D,t)(D^TD)^2(I+t^2D^TD)^{-2} \\
&&-2t\dot{\textup{ortho}}_{PD}(U,D,t)(D^TD)(I+t^2D^TD)^{-1} \\
&&-\textup{ortho}_{PD}(U,D,t)(D^TD)(I+t^2D^TD)^{-1},
\end{eqnarray*}
where $\mathcal{W}=DU^T-UD^T$. Thus, $\mathcal{W}D= -UD^TD$. Noting that $\|(I-\frac{t\mathcal{W}}{2})^{-1}\|_2\leq 1$ and \eqref{prop4.2}, we have
\begin{eqnarray*}
&&\|\ddot{\textup{ortho}}_{WY}(U,D,t)+UD^TD\|_F \\
&\leq&\|(I-\frac{t\mathcal{W}}{2})^{-1}\|_2\|\mathcal{W}\dot{\textup{ortho}}_{WY}(U,D,t)-(I-\frac{t\mathcal{W}}{2})\mathcal{W}D\|_F \\
&\leq&\|\mathcal{W}(\dot{\textup{ortho}}_{WY}(U,D,t)-\dot{\textup{ortho}}_{WY}(U,D,0))\|_F+\frac{t}{2}\|\mathcal{W}^2D\|_F \\
&\leq&C_2t\|\mathcal{W}\|_F\|D\|_F^2+\frac{t}{2}\|\mathcal{W}\|_F^2\|D\|_F = (\sqrt{2}C_2+1)t\|D\|_F^3,
\end{eqnarray*}
where the fact that $\|\mathcal{W}\|_F = \sqrt{2}\|D\|_F$ is used in the last equality. Similarly, we obtain by $\|(I+t^2D^TD)^{-k}\|_F \leq 1, \forall k\ge 0$ that
\begin{eqnarray*}
&&\|\ddot{\textup{ortho}}_{PD}(U,D,t)+UD^TD\|_F \\
&\leq&t^2\|D\|_F^4+2t\|(\dot{\textup{ortho}}_{PD}(U,D,t)-D)(D^TD)\|_F+2t\|D\|_F^3 \\
&&+\|(U-\dot{\textup{ortho}}_{PD}(U,D,t))(D^TD)\|_F+t^2\|U(D^TD)^2\|_F \\
&\leq&(2+C_1)t\|D\|_F^3+4t^2\|\|D\|_F^4\leq(6+C_1)t\|D\|_F^3,
\end{eqnarray*}
provided that $t\|D\|_F \le 1$.
Thus, $M_1$ can be chosen as $\max\{\sqrt{2}C_2+1, C_1+6\}$. By \eqref{2ndder}, we see that there exists $\xi\in(0,t)$ such that
\begin{eqnarray*}
\|\textup{ortho}_*(U,D,t)-U-tD+\frac{t^2}{2}UD^TD\|_F &=&\frac{t^2}{2}\|\ddot{\textup{ortho}}_*(U,D,\xi)+UD^TD\|_F \\
&\leq&\frac{t^2\xi M_1}{2}\|D\|_F^3\leq\frac{M_1}{2}t^3\|D\|_F^3.
\end{eqnarray*}
Here, the subscript * can be $WY$, $PD$. In further, we have
\begin{eqnarray*}
&&\|\textup{ortho}_{GA}(U,D,t)-\textup{ortho}_{WY}(U,D,t)\|_F \\
&=&\|\Big(I - \frac{t}{2} \mathcal{W} + P(t, \mathcal{W})^T\Big)^{-1}\Big(I + \frac{t}{2}\mathcal{W} + P(t, \mathcal{W})\Big)U - (I - \frac{t}{2} \mathcal{W})^{-1}(I + \frac{t}{2}\mathcal{W})U\|_F \\
&\leq&\|\Big(I - \frac{t}{2} \mathcal{W} + P(t, \mathcal{W})^T\Big)^{-1}\|_2\|P(t,\mathcal{W})U + P(t,\mathcal{W})^T(I - \frac{t}{2} \mathcal{W})^{-1}(I + \frac{t}{2}\mathcal{W})U\|_F \\
&\leq&\|\Big(P(t,\mathcal{W})-P(t,\mathcal{W})^T\Big)U\|_F+\|P(t,\mathcal{W})^T\|_F\|(I - \frac{t}{2} \mathcal{W})^{-1}(I + \frac{t}{2}\mathcal{W})U\|_F \\
&\leq&O(t^3\|D\|_F^3)+O(t^2\|D\|_F^2)\|(I - \frac{t}{2} \mathcal{W})^{-1}\|_2\|t\mathcal{W}U\|_F \\
&=&O(t^3\|D\|_F^3).
\end{eqnarray*}
Thus,
\begin{eqnarray}\label{GAappro}
&&\|\textup{ortho}_{GA}(U,D,t)-U-tD+\frac{t^2}{2}UD^TD\|_F \notag\\
&\leq&\|\textup{ortho}_{GA}(U,D,t)-\textup{ortho}_{WY}(U,D,t)\|_F \notag\\
&&+\|\textup{ortho}_{WY}(U,D,t)-U-tD+\frac{t^2}{2}UD^TD\|_F \notag\\
&=&O(t^3\|D\|_F^3).
\end{eqnarray}
As a result, there holds by \eqref{taylor1} and \eqref{der-bound} that
\begin{eqnarray*}
&&|E(\textup{ortho}_1(U,D,t))-E(\textup{ortho}_2(U,D,t))| \\
&\leq& C_0\textup{dist}_{geo}(\textup{ortho}_1(U,D,t),\textup{ortho}_2(U,D,t)) \\
&\leq&2C_0\textup{dist}_{F}(\textup{ortho}_1(U,D,t),\textup{ortho}_2(U,D,t)) \\
&\leq&2C_0\|\textup{ortho}_1(U,D,t)-\textup{ortho}_2(U,D,t)\|_F \\
&=&O(t^3\|D\|_F^3),
\end{eqnarray*}
where both $\textup{ortho}_1(U,D,t)$ and $\textup{ortho}_2(U,D,t)$ are one of WY, PD, or GA strategy.
As for QR strategy, we note that $[\textup{ortho}_{QR}(U,D,t)]=[\textup{ortho}_{PD}(U,D,t)]$ and completes our proof by
\begin{equation}
E(\textup{ortho}_{QR}(U,D,t))=E(\textup{ortho}_{PD}(U,D,t)).
\end{equation}
\end{proof}

We see from \eqref{GAappro} that the GA methods satisfy Assumption \ref{diff}, i.e., \eqref{prop4.1} and \eqref{prop4.2} hold for GA strategies.

\end{document}